\numberwithin{equation}{section}
\newtheorem{theorem}{\textbf{Theorem}}[section]
\newtheorem{theorem*}{\textbf{Theorem}}
\newtheorem{definition}[theorem]{\textbf{Definition}}
\newtheorem{proposition}[theorem]{\textbf{Proposition}}
\newtheorem{corollary}[theorem]{\textbf{Corollary}}
\newtheorem{remark}[theorem]{\textbf{Remark}}
\newtheorem{assumption}[theorem]{\textbf{Assumption}}
\newtheorem{example}[theorem]{\textbf{Example}}
\newtheorem{definition/proposition}[theorem]{\textbf{Definition/Proposition}}
\def\N{{\mathbb N}}
\def\R{\mathbb{R}}
\def\Z{{\mathbb Z}}
\def\C{{\mathbb C}}
\newcommand{\CP}{\mathbb{C}\mathbb{P}}
\def\a{\alpha}
\def\cA{{\mathcal A}}
\def\cB{{\mathcal B}}
\def\cC{{\mathcal C}}
\def\cD{{\mathcal D}}
\def\cE{{\mathcal E}}
\def\cF{{\mathcal F}}
\def\cH{{\mathcal H}}
\def\cJ{{\mathcal J}}
\def\cM{{\mathcal M}}
\def\cP{{\mathcal P}}
\def\cS{{\mathcal S}}
\def\cU{{\mathcal U}}
\def\fH{{\mathfrak H}}
\def\fP{{\mathfrak P}}
\def\rD{{\rm D}}
\def\rd{{\rm d}}
\DeclareMathOperator{\Ima}{im}
\DeclareMathOperator{\ind}{ind}
\DeclareMathOperator{\Id}{id}
\DeclareMathOperator{\mor}{Mor}
\DeclareMathOperator{\obj}{Obj}
\title{On the cohomology ring of symplectic fillings}
\author{Zhengyi Zhou}
\begin{document}
	\maketitle
\begin{abstract}
We consider symplectic cohomology twisted by sphere bundles, which can be viewed as an analogue of symplectic cohomology with local systems. Using the associated Gysin exact sequence, we prove the uniqueness of part of the ring structure on cohomology of fillings for those asymptotically dynamically convex manifolds with vanishing property considered in \cite{zhou2017vanishing,zhou2019symplectic}. In particular, for any simply connected $4n+1$ dimensional flexibly fillable contact manifold $Y$, we show that the real cohomology $H^*(W)$ is unique as a ring for any Liouville filling $W$ of $Y$ as long as $c_1(W)=0$. Uniqueness of real homotopy type of Liouville fillings is also obtained for a class of flexibly fillable contact manifolds.
\end{abstract}
\tableofcontents
\section{Introduction}
It is conjectured that Liouville fillings of certain contact manifolds are unique. The first result along this line is by Gromov \cite{gromov1985pseudo} that Liouville fillings of the standard contact $3$-sphere are unique. The dimension $4$ case is special because of the intersection theory of $J$-holomorphic curves. For higher dimensional cases, only weaker assertions can be made so far. Eliashberg-Floer-McDuff \cite{mcduff1991symplectic} proved that any symplectically aspherical  filling of the standard contact sphere of dimension $\ge 5$ is diffeomorphic to a ball. Oancea-Viterbo \cite{oancea2012topology} showed that $H_*(Y;\Z)\to H_*(W;\Z)$ is surjective for a simply-connected subcritically fillable contact manifold $Y$ and any symplectically aspherical $W$. Barth-Geiges-Zehmisch \cite{barth2016diffeomorphism} generalized the Eliashberg-Floer-McDuff theorem to the subcritically fillable case assuming $Y$ is simply connected and of dimension $\ge 5$. Roughly speaking, the method used to obtain above results is finding a ``homological foliation", which is hinted at by the splitting result of Cieliebak \cite[Theorem 14.16]{MR3012475} for subcritical domains. 

On the other hand, contact manifolds considered above are asymptotically dynamically convex (ADC) in the sense of Lazarev \cite{lazarev2016contact}, which is a much larger class of contact manifolds. Those contact manifolds admit only the trivial (contact DGA) augmentation due to degree reasons, hence one may expect the filling is rigid in some sense. In \cite{zhou2019symplectic2,zhou2017vanishing,zhou2019symplectic}, we studied fillings of such manifolds using Floer theories. Roughly speaking, a contact manifold is ADC iff the symplectic field theory (SFT) grading is positive (asymptotically).  By the neck-stretching argument, such condition is sufficient to prove invariance of many structures in \cite{zhou2019symplectic2,zhou2019symplectic}. However, in many cases, the SFT gradings are greater than some positive integer $k$, which provides more room in the neck-stretching argument. The goal of this paper is trying to make use of such extra room and getting more information. In particular, we will study the ring structure of symplectic fillings. Our main theorem is the following. We call a Liouville filling $W$ of $Y$ \emph{topologically simple} iff $c_1(W)=0$ and $\pi_1(Y)\to \pi_1(W)$ is injective. Throughout this paper, the default coefficient is $\R$.
\begin{theorem}\label{thm:main}
Let $Y$ be a $k$-ADC contact manifold (Definition \ref{def:kADC}) with a topologically simple Liouville filling $W_1$ and $SH^*(W_1)=0$. Then for any topologically simple Liouville filling $W_2$, there is a linear isomorphism $\phi: H^*(W_1) \to H^*(W_2)$ preserving grading, such that $\phi(\alpha \wedge \beta) = \phi(\alpha)\wedge \phi(\beta)$ for all $\alpha \in H^{2m}(W_1)$ with $2m\le k+1$.
\end{theorem}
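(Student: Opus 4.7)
The plan is to obtain $\phi$ first as a grading-preserving linear isomorphism from the invariance theorems for ADC fillings with vanishing symplectic cohomology already established in \cite{zhou2017vanishing,zhou2019symplectic}, and then to upgrade it to be multiplicative in the stated degree range by pairing it with Gysin exact sequences coming from sphere-bundle-twisted symplectic cohomology. Under $k$-ADC with $SH^*(W_1)=0$, the tautological exact sequence linking $H^*(W)$, $SH^*(W)$ and the positive symplectic cohomology $SH_+^*(W)$, together with the contact invariance of $SH_+^*$ in degrees $\le k+1$ under $k$-ADC neck-stretching, forces $SH^*(W_2)=0$ and produces the grading-preserving linear isomorphism $\phi:H^*(W_1)\to H^*(W_2)$, so only the multiplicativity statement is new.

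Fix $\alpha\in H^{2m}(W_1)$ with $2m\le k+1$. After rescaling by a nonzero real number we may assume $\alpha$ is represented by the Euler class of an oriented rank-$2m$ real vector bundle $E_1\to W_1$; over $\R$ this is always possible, up to scale, by passing to the underlying oriented real bundle of a suitable complex bundle and invoking rational surjectivity of the top Chern class onto $H^{2m}(W_1;\R)$. Restrict to the boundary to get $E_Y:=E_1|_Y$; since $W_2$ is topologically simple and $\phi$ already matches the relevant obstruction groups through degree $k+1$, the classifying map $Y\to BSO(2m)$ of $E_Y$ extends over $W_2$ to give a bundle $E_2\to W_2$ with $E_2|_Y\cong E_Y$ and $e(E_2)=\phi(e(E_1))$ (up to the same rescaling).

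Next, introduce the sphere-bundle-twisted symplectic cohomology $SH^*(W;E)$ announced in the abstract and the attached Gysin exact sequence
\[
\cdots\to SH^{*-2m}(W)\xrightarrow{\,\wedge e(E)\,}SH^*(W)\to SH^*(W;E)\to SH^{*-2m+1}(W)\to\cdots,
\]
compatible, via the canonical map $H^*(W)\to SH^*(W)$, with the ordinary Gysin sequence of $S(E)\to W$. Applied to $(W_i,E_i)$ for $i=1,2$ and combined with $SH^*(W_i)=0$, this exhibits cup product by $e(E_i)$ as a connecting map in a commuting square whose other sides are positive symplectic cohomology invariants already matched by $\phi$. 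Extending the neck-stretching argument to the twisted moduli spaces shows that $SH_+^*(W;E)$ is a contact invariant of the pair $(Y,E_Y)$ in degrees $\le k+1$, which intertwines the two Gysin diagrams by $\phi$ in that range. Reading off cup product with $\alpha$ on the top row yields $\phi(\alpha\wedge\beta)=\phi(\alpha)\wedge\phi(\beta)$ for every $\beta\in H^*(W_1)$.

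The main obstacle is the construction of the twisted theory $SH^*(W;E)$ together with its Gysin sequence and its contact invariance under $k$-ADC neck-stretching. This requires a Floer theory whose generators are decorated by lifts in $S(E)$; the Gysin sequence is then the spectral sequence attached to the sphere fibration and degenerates because the fibers are homologically concentrated. The degree restriction $2m\le k+1$ is exactly the range in which the SFT grading gain provided by $k$-ADC dominates the $(2m-1)$-dimensional shift introduced by the sphere fiber, so that the comparison moduli spaces have no uncontrolled boundary contributions.
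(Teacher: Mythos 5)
Your overall strategy is the paper's: realize the even-degree class (up to a multiple) as the Euler class of a sphere bundle on each filling with the two bundles agreeing over $Y$, compare the resulting Gysin sequences on positive symplectic cohomology via neck-stretching under the $k$-ADC hypothesis, and use $SH^*(W_1)=SH^*(W_2)=0$ to transport Euler-class multiplication to $H^*(W_i)$. The genuine gap is the step that produces $E_2\to W_2$. You construct it by claiming that the classifying map $Y\to BSO(2m)$ of $E_Y=E_1|_Y$ ``extends over $W_2$'' because $W_2$ is topologically simple and $\phi$ ``matches the relevant obstruction groups.'' Extending a map from $\partial W_2$ over $W_2$ is obstructed by classes in $H^{j+1}(W_2,Y;\pi_j(BSO(2m)))$, and nothing in the hypotheses ($c_1=0$, $\pi_1$-injectivity) kills these obstructions; even when an extension exists you have no control over its Euler class, so $e(E_2)=\phi(e(E_1))$ does not follow. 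Worse, since you rescaled $\alpha$ by a real number, all you could hope to match on $Y$ is a real Euler class, and equality of real (or even integral) Euler classes over $Y$ does not yield a bundle isomorphism $E_1|_Y\cong E_2|_Y$; the neck-stretching comparison (Proposition \ref{prop:ind}) needs an honest identification of the bundles, with their parallel transport, over the region containing the low-dimensional moduli spaces, not just agreement of cohomology classes.

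The paper avoids this by never extending a bundle from the boundary and by working integrally. By \cite[Corollary B]{zhou2019symplectic} the linear map $\phi$ (defined, as you do, from $H^*(W_i)\cong SH^{*-1}_+(W_i)$ and the filling-independence of the positive theory) is induced by an integral isomorphism $\phi_\Z$ compatible with restriction to $Y$, so $\alpha_2:=\phi_\Z(\alpha_1)$ satisfies $\alpha_2|_Y=\alpha_1|_Y$ in $H^{2m}(Y;\Z)$. Then Theorem \ref{thm:euler} (from \cite{guijarro2002bundles}) provides a universal oriented rank-$2m$ bundle over the $(2n+1)$-skeleton of $K(\Z,2m)$ whose Euler class is $N$ times the tautological class, with $N$ depending only on degree and dimension; pulling it back under maps classifying $\alpha_1$ and $\alpha_2$ gives $E_1,E_2$ with $e(E_i)=N\alpha_i$ and, because the classifying maps agree up to homotopy on $Y$, with $E_1|_Y\cong E_2|_Y$. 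The uniform integer $N$ is harmless over $\R$ since $\phi(N\alpha_1)=N\alpha_2$. A secondary overstatement: you assert that the twisted positive theory $SH^*_+(W;E)$ is itself a contact invariant in a degree range, but stretching only pins down moduli spaces of dimension at most $k$, while the twisted complex a priori requires the full flow category; the paper therefore only proves filling-independence of the Euler part of the Gysin sequence (Propositions \ref{prop:natural1}, \ref{prop:ind}) and its compatibility with $\wedge\, e(E)$ on $H^*(W)$ (Proposition \ref{prop:final}), which is all the argument needs.
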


The main example where Theorem \ref{thm:main} can be applied is flexibly fillable contact manifold $Y^{2n-1}$, which is $(n-3)$-ADC by Lazarev \cite{lazarev2016contact}. In particular, combining with \cite[Corollary B]{zhou2019symplectic}, we have the following.

\begin{corollary}\label{cor:ring}
	Let $Y$ be a simply connected $4n+1$ dimensional flexibly fillable contact manifold with $c_1(Y)=0$, then the real cohomology ring of Liouville fillings of $Y$ with vanishing first Chern class is unique.
\end{corollary}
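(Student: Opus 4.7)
The plan is to apply Theorem~\ref{thm:main} with $W_1$ a flexible Weinstein filling, then to promote the partial ring compatibility of the resulting $\phi$ to a full ring isomorphism using topological constraints from the Weinstein skeleton. The key dimensional input is that in real dimension $4n+2$ the cohomology of $W_1$ is concentrated in degrees $0\le i\le 2n+1$, which tightly bounds the total degree of any nontrivial cup product.

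First I verify the hypotheses of Theorem~\ref{thm:main}. Lazarev's theorem \cite{lazarev2016contact} says a flexibly fillable contact manifold of dimension $2N-1$ is $(N-3)$-ADC; with $2N-1=4n+1$ one gets $N=2n+1$, so $Y$ is $(2n-2)$-ADC and $k+1=2n-1$. Any flexible Weinstein filling $W_1$ has isotropic skeleton of real dimension $\le 2n+1$, so Poincar\'e-Lefschetz gives $H^2(W_1,Y;\Z)\cong H_{4n}(W_1;\Z)=0$; the long exact sequence then forces $H^2(W_1;\Z)\hookrightarrow H^2(Y;\Z)$, so $c_1(W_1)=0$ follows from $c_1(W_1)|_Y=c_1(Y)=0$. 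By \cite[Corollary B]{zhou2019symplectic}, $SH^*(W_1)=0$. Simple connectedness of $Y$ makes $\pi_1(Y)\to\pi_1(W_2)$ vacuously injective, so any filling $W_2$ with $c_1(W_2)=0$ is topologically simple. Theorem~\ref{thm:main} then produces a grading-preserving linear isomorphism $\phi: H^*(W_1)\to H^*(W_2)$ with $\phi(\alpha\wedge\beta)=\phi(\alpha)\wedge\phi(\beta)$ whenever $\alpha\in H^{2m}(W_1)$ and $2m\le 2n-1$.

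To promote $\phi$ to a ring isomorphism I use the following topological consequences (assuming $n\ge 2$; the case $n=1$ is trivial as the ring structure reduces to scalar multiplication). Since $W_1$ is Weinstein of dimension $4n+2$, $H^{>2n+1}(W_1)=0$, and $\phi$ transports this vanishing to $W_2$; products of total degree $>2n+1$ are therefore matched trivially. Lefschetz duality gives $H^i(W_1,Y)=0$ for $i\le 2n$, so $H^1(W_1)=0$ (using $H^1(Y)=0$), $H^i(W_1)\cong H^i(Y)$ for $i\le 2n-1$ via restriction, and $H^{2n}(W_1)\hookrightarrow H^{2n}(Y)$ is injective; hence also $H^1(W_2)=0$. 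Now let $\alpha,\beta\in H^*(W_1)$ with $|\alpha|+|\beta|\le 2n+1$; arranging $|\alpha|\le|\beta|$ gives $|\alpha|\le n\le 2n-2$. If $|\alpha|$ is even, Theorem~\ref{thm:main} applies directly; if $|\alpha|$ is odd then $|\alpha|\ge 3$ (from $H^1=0$), so $|\beta|\le 2n-2$, and when $|\beta|$ is even we apply Theorem~\ref{thm:main} to $\beta\wedge\alpha$ and use graded commutativity (sign $+1$ since $|\beta|$ is even).

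The remaining case, which I expect to be the main obstacle, is both $|\alpha|,|\beta|$ odd; here the product lies in $H^{|\alpha|+|\beta|}(W_1)$ of even degree $\le 2n$. The plan is to test the identity $\phi(\alpha\wedge\beta)=\phi(\alpha)\wedge\phi(\beta)$ against the restriction-to-boundary map: by injectivity of $H^i(W_j)\to H^i(Y)$ for $i\le 2n$, it suffices to check that $(\alpha\wedge\beta)|_Y=\alpha|_Y\wedge\beta|_Y$ matches the corresponding expression for the image under $\phi$. What must be extracted from the proof of Theorem~\ref{thm:main}, rather than its statement, is that $\phi$ intertwines the restriction maps $H^*(W_1)\to H^*(Y)$ and $H^*(W_2)\to H^*(Y)$; this compatibility is natural because $\phi$ is built from positive (and/or $S^1$-equivariant) symplectic cohomology, which under the $k$-ADC hypothesis is an invariant of the contact boundary alone. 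Verifying this intertwining is the central technical point of the deduction.
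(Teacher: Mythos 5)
Your overall route is the paper's: feed the flexible filling $W_1$ (which is $(2n-2)$-ADC, simply connected, with $c_1(W_1)=0$ and $SH^*(W_1)=0$) into Theorem \ref{thm:main}, handle products having an even-degree factor of degree $\le 2n-2$ directly, and settle the remaining products by restricting to the boundary; your degree bookkeeping (nontrivial products have total degree $\le 2n+1$, $H^1=0$, so the only unresolved case is two odd-degree factors with even product degree $\le 2n$) agrees with the proof of Corollary \ref{cor:ringlist}.

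The gap is at what you yourself label the central technical point, and it is genuine as written: you use ``injectivity of $H^i(W_j)\to H^i(Y)$ for $i\le 2n$'' for both fillings, but your Lefschetz-duality argument only applies to the Weinstein filling $W_1$; for an arbitrary Liouville filling $W_2$ this is not a topological fact, and Proposition \ref{prop:lowdg} only reaches even degrees $\le k+1=2n-1$, so it misses exactly the degree-$2n$ products you need. Likewise, the statement of Theorem \ref{thm:main} says nothing about restriction maps, and the heuristic that positive symplectic cohomology ``only depends on the boundary'' does not by itself produce the intertwining $\phi(\alpha)|_Y=\alpha|_Y$. Both missing facts are precisely \cite[Corollary B]{zhou2019symplectic}: for such boundaries the entire map $H^*(W;\Z)\to H^*(Y;\Z)$ is independent of the topologically simple filling, which transfers the injectivity in degrees $\le 2n$ from $W_1$ to $W_2$ and, as recorded in the proof of Theorem \ref{thm:main} (where $\phi$ is induced by the integral identification $\phi_\Z$ with matching restrictions to $Y$), gives the compatibility of $\phi$ with restriction. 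So the argument closes once you invoke that citation (cf.\ the remark following Theorem \ref{thm:main}), but as submitted the odd-odd case is not proved.
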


\begin{remark}
	By \cite[Corollary B]{zhou2019symplectic}, manifolds considered in Theorem \ref{thm:main} has the property that $H^*(W;\Z)\to H^*(Y;\Z)$ is independent of topologically simple Liouville fillings. Therefore on the degree region where the restriction map is injective, we can infer the ring structure of the filling from the boundary. However, such method can not yield information when the product lands in a degree of where the restriction is not injective, i.e.\ the middle degree for flexibly fillable manifolds. The method used in this paper asserts the uniqueness of product structure in those ambiguous regions.
\end{remark}

There are also non-Weinstein examples that Theorem \ref{thm:main} can be applied, see \S \ref{s2}. If $Y$ is subcritically fillable, then the $\pi_1$-injective condition can be dropped because Reeb orbits can be assumed to be contractible \cite{lazarev2016contact}. However this case is covered by both \cite[Corollary B]{zhou2019symplectic} and \cite[Theorem 1.2]{barth2016diffeomorphism} along with the universal coefficient theorem. 

In some cases, the knowledge of the cohomology ring is enough to determine the real homotopy type. In particular, we have the following corollary. 
\begin{corollary}\label{cor:real}
	Let $M$ be the product of $\CP^n, \mathbb{HP}^n, S^{2n}$ and at most one copy of $S^{2n+1}$ for $n\ge 1$, let $Y$ denote the contact boundary of the flexible cotangent bundle of $M$. Then the real homotopy type of Liouville fillings of $Y$ is unique, as long as the Liouville filling has vanishing first Chern class.
\end{corollary}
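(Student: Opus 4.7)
The plan is to combine the real cohomology ring uniqueness from Corollary \ref{cor:ring} with intrinsic formality of $M$ in Sullivan's real homotopy theory.

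First I would verify the hypotheses of Corollary \ref{cor:ring}: $M$ is simply connected, so the unit cotangent bundle $Y=ST^*M$ is simply connected (its sphere fibers have dimension $\ge 2$); $c_1(Y)=0$ for cotangent bundle boundaries; and flexibility is given. Since the flexibilized $T^*M$ is itself a topologically simple Liouville filling of $Y$ that deformation retracts onto $M$, Corollary \ref{cor:ring} yields $H^*(W;\R)\cong H^*(M;\R)$ as graded rings for every Liouville filling $W$ with $c_1(W)=0$. I would also need $W$ to be simply connected for Sullivan's theory: a Weinstein handle decomposition attaches handles of index at most $\frac{1}{2}\dim W\ge 2$ to a collar of $Y$, making $\pi_1(Y)\to\pi_1(W)$ surjective, and $\pi_1(Y)=0$ then forces $\pi_1(W)=0$.

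The core step is intrinsic formality of $H^*(M;\R)$: every simply connected CDGA with this cohomology is quasi-isomorphic to $(H^*(M;\R),0)$. Each factor has cohomology of the form $\R[x]/(x^{k+1})$ (for $\CP^n$, $\mathbb{HP}^n$, $S^{2n}$) or $\Lambda(x)$ on a single generator (for $S^{2n+1}$), and each of these algebras is intrinsically formal by an explicit construction: given a CDGA $A$ with matching cohomology, lift $x$ to a cocycle $\tilde x\in A$ and, in the truncated case, pick $\tilde y\in A$ with $d\tilde y=\tilde x^{k+1}$, yielding a CDGA quasi-isomorphism from the Hirsch extension $\Lambda(x,y)$, $dy=x^{k+1}$, into $A$. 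By K\"unneth, $H^*(M;\R)$ and its minimal Sullivan model are the tensor products of the factor pieces. Performing the factor-by-factor construction simultaneously for any CDGA with cohomology $H^*(M;\R)$—cocycle lifts of each generator and coboundary primitives for each monomial relation—gives a quasi-isomorphism from the tensor-product model into the target, with graded-commutativity of the target automatically enforcing all cross-factor relations.

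Combining these, $W$ and $M$ are simply connected with isomorphic intrinsically formal real cohomology rings, so their minimal Sullivan models agree and they share the same real homotopy type. The main technical obstacle is the intrinsic formality of the product algebra: individual factors are handled by elementary cocycle chasing, but for the full product one must check that the factor-wise lifts can be chosen coherently. This reduces to a Harrison/Hochschild-type obstruction vanishing, which holds here because the relations are pure monomials in individual generators and the generator degrees are well-separated, leaving no room for Massey-type obstructions.
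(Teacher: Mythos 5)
Your overall route---ring uniqueness of $H^*(W;\R)$ plus intrinsic formality of $H^*(M;\R)$, then simple connectivity to conclude the real homotopy type---is the same as the paper's, and your formality step is essentially right and even simpler than you fear: the tensor-product Sullivan model $\bigotimes(\Lambda(x_i,z_i), dz_i=x_i^{k_i+1})\otimes\Lambda(y)$ is free, so a CDGA map to any $(A,d)$ with the right cohomology is specified by cocycle lifts of the $x_i,y$ and primitives $\tilde z_i$ of $\tilde x_i^{k_i+1}$, and it is automatically a quasi-isomorphism because $H^*(M;\R)$ is generated as an algebra by the lifted classes; there is no Harrison/Hochschild obstruction to check. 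The genuine problems are elsewhere. First, your argument that $\pi_1(W)=0$ is invalid: you appeal to a Weinstein handle decomposition of $W$, but $W$ is only assumed to be a Liouville filling with $c_1(W)=0$ and need not be Weinstein (non-Weinstein Liouville domains are exactly the kind of filling these uniqueness statements must rule out, and the paper stresses such examples). Simple connectivity of an arbitrary Liouville filling here is a nontrivial Floer-theoretic input, which the paper takes from \cite[Theorem E]{zhou2019symplectic}; without it the minimal model only controls the real homotopy type of a nilpotent space, so this step cannot be repaired by handle-index considerations.

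Second, you verify only the hypotheses of Corollary \ref{cor:ring}, which requires $\dim Y\equiv 1\pmod 4$, i.e.\ $\dim M$ odd. This holds exactly when $M$ contains the (single) odd-dimensional sphere factor; if $M$ is a product of only the even-dimensional factors---allowed, since ``at most one copy'' includes none---then $\dim Y=2\dim M-1\equiv 3\pmod 4$ and Corollary \ref{cor:ring} does not apply. The paper instead uses Corollary \ref{cor:ringlist}: in case (2) the extra hypothesis about products of two odd-degree classes in the middle degree is vacuous here because $H^{\mathrm{odd}}(M;\R)=0$. As written, your proposal does not establish ring uniqueness in the even-dimensional case, so you should replace the appeal to Corollary \ref{cor:ring} by Corollary \ref{cor:ringlist} together with this vacuity observation (and note in passing that your claim that the fibers of $ST^*M$ have dimension at least two also needs $\dim M\ge 3$).
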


The method used in this paper is very different from the method used in \cite{barth2016diffeomorphism,mcduff1990structure,oancea2012topology}, where they studied the moduli spaces of $J$-holomorphic curves in a partial compactification of $W$. The essential property needed for the partial compactification is that $W$ splits to $V\times \C$ with $V$ Weinstein. However, for many flexible critical Weinstein domains, such splitting does not exist even in the topology category, e.g.\ flexible cotangent bundles $T^*S^{2n}$ can not be written as a complex line bundle over a manifold for $n>1$\footnote{Assume otherwise, $T^*S^{2n}$ can be written as a complex line bundle over some manifold $V$ with boundary, since $H^2(V;\Z)=0$ when $n>1$, the complex line bundle is necessarily trivial. Therefore we have $T^*S^{2n}=V\times \mathbb{D}$, where $\mathbb{D}$ is the unit disk in $\C$. On the other hand, $H^*(T^*S^{2n};\Z)\to H^*(\partial T^*S^{2n};\Z)$ is not injective (in degree $2n$) but $H^*(V\times \mathbb{D};\Z)\to H^*(\partial (V\times \mathbb{D});\Z)$ is always injective, hence we arrive at a contradiction.}. Our method is based on symplectic cohomology and uses the index property of the contact boundary, hence we need to assume $c_1=0$. The strategy of the proof is representing the cup product as a multiplication with an Euler class of a sphere bundle. Therefore we consider symplectic cohomology twisted by sphere bundles, which leads to Gysin exact sequences.  The Gysin exact sequence associated to a $k$-sphere bundle uses moduli spaces of dimension up to $k$. We show that the Gysin exact sequence for a $k$-sphere bundle on the positive symplectic cohomology is independent of the filling by a neck-stretching argument, if the boundary is $k$-ADC. Then we can relate it to the regular Gysin sequence of the filling by the vanishing result in \cite{zhou2017vanishing}. 

\begin{remark}
The reason of restricting to real coefficient is twofold. Firstly, it is not true that every class in $H^{2k}(M;\Z)$ can be represented as the Euler class of an oriented vector bundle \cite{walschap2002euler} unless multiplied by a large integer \cite{guijarro2002bundles}, which only depends on the degree and dimension. Secondly, the Gysin exact sequence is derived from the Morse-Bott framework developed in \cite{MB}, which is defined over $\R$. In the case considered in this paper, one can get Gysin exact sequences in $\Z$-coefficient, since our moduli spaces do not have isotropy or weight. For example, one can generalize the Morse-Bott construction in \cite{hutchings2017axiomatic} to sphere bundles to prove a $\Z$-coefficient Gysin exact sequence.
\end{remark}
\begin{remark}
	In this paper, by symplectic cohomology we mean the symplectic cohomology generated by \emph{contractible} orbits. The role of topologically simplicity of the filling is to guarantee that the symplectic cohomology of the filling is canonically graded by $\Z$ using any trivialization of $\det \xi$ on $Y$. From the SFT perspective, it is related to that the augmentation from the filling is (canonically) graded by $\Z$. Since the ADC condition only asserts unique contact DGA augmentation with a $\Z$ grading, we can only hope for uniqueness for topologically simple fillings using the ADC condition, also see \cite[Remark 3.6]{zhou2019symplectic}.
\end{remark}

\subsection*{Organization of the paper}
\S \ref{s2} reviews the contact geometry background and provides a list of examples where Theorem \ref{thm:main} applies.  In \S \ref{s3}, we define the symplectic cohomology of sphere bundles and prove the independence result when the boundary is $k$-ADC. We finish the proof of Theorem \ref{thm:main}, its corollaries and applications in \S \ref{s4}.

\subsection*{Acknowledgements}
The author is supported  by  the National Science Foundation under Grant No. DMS-1638352.  It is a great pleasure to acknowledge the Institute for Advanced Study for its warm hospitality.  The author would like to thank the referee for many suggestions that improved this paper. This paper is dedicated to the memory of Chenxue.
\section{Asymptotically dynamically convex manifolds}\label{s2}
Let $\alpha$ be a contact form of $(Y^{2n-1},\xi)$ and $D > 0$, we use $\cP^{<D}(Y,\alpha)$ to denote the set of contractible Reeb orbits of $\alpha$ with period smaller than $D$. Let $\alpha_1,\alpha_2$ be two contact forms of $(Y,\xi)$, we write $\alpha_1 \ge \alpha_2$ if $\alpha_1 = f\alpha_2$ for $f\ge 1$. For a non-degenerate Reeb orbit $\gamma$, the degree is defined to be $\mu_{CZ}(\gamma)+n-3$, which is canonically defined in $\Z$ if $c_1(\xi)=0$ and $\gamma$ is contractible.
\begin{definition}[{\cite[Definition 3.6]{lazarev2016contact}}]\label{def:ADC}
	A contact manifold $(Y, \xi)$ with $c_1(\xi)=0$ is asymptotically dynamically convex (ADC) if there exist a non-increasing sequence of  contact forms $\a_1 \ge \a_2 \ge \a_3\ge \ldots$ for $\xi$ and increasing
	positive numbers $D_1 < D_2 < D_3 < \ldots$ going to infinity, such that all elements of $\cP^{<D_i} (Y, \a_i)$ are non-degenerate and have positive degree.
\end{definition}

One important consequence of ADC is that the positive symplectic cohomology is independent of the filling $W$ whenever $c_1(W)=0$ and $\pi_1(Y)\to \pi_1(W)$ is injective \cite[Proposition 3.8]{lazarev2016contact}.  Moreover, many Floer theoretic properties of the filling are independent of fillings \cite{zhou2019symplectic2,zhou2019symplectic}. Roughly speaking, ADC guarantees the $0$-dimensional moduli spaces used in the definition of the positive symplectic cohomology are completely contained in the cylindrical end of the completion $\widehat{W}$, hence are independent of the filling. In this paper, we consider sphere bundles over (positive) symplectic cohomology. The information for $k$-sphere bundles is encoded in moduli spaces with dimension up to $k$. In particular, the associated Gysin exact sequence depends on moduli spaces with dimension up to $k$. Therefore we need more positivity in the degree of Reeb orbits, the following finer dynamical convexity is needed.
\begin{definition} \label{def:kADC}
	A contact manifold $(Y, \xi)$ with $c_1(\xi)=0$ is $k$-ADC if there exist a non-increasing sequence of contact forms $\a_1 \ge \a_2 \ge \a_3 \ge \ldots$ for $\xi$ and increasing positive numbers $D_1 < D_2 < D_3 < \ldots$ going to infinity, such that all elements of $\cP^{<D_i} (Y, \a_i)$ are non-degenerate and have degree $>k$. 
	
	Similarly,  we say a Liouville domain $(W,\lambda)$ with $c_1(W)=0$ is $k$-ADC, iff there exist positive functions $f_1\ge f_2\ge \ldots$ and positive numbers $D_1<D_2<D_3<\ldots$ going to infinity, such that all contractible (in $W$) orbits of $(\partial W, f_i\lambda)$ of periods up to $D_i$ are non-degenerate and have degree $>k$.
\end{definition}
In particular, $(k+1)$-ADC implies $k$-ADC and $0$-ADC is the usual ADC condition in \cite{lazarev2016contact}. The basic example of $k$-ADC manifold is the standard contact sphere $S^{2n-1}$, which is $(2n-3)$-ADC. From this basic example, the following propositions yield many $k$-ADC manifolds.

\begin{proposition}[{\cite[Theorem 3.15, 3.17, 3.18]{lazarev2016contact}}]\label{prop:handle}
	Let $Y$ be a $(2n-3-k)$-ADC contact manifold, then the attachment of a index $k\ne 2$ subcritical/flexible handle to $Y^{2n-1}$ preserves the $(2n-3-k)$-ADC property. When $k=2$, the same holds if the conditions in \cite[Theorem 3.17]{lazarev2016contact} are met.
\end{proposition}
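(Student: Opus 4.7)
The plan is to produce an explicit model for the flexible Weinstein handle attachment and then control the Reeb dynamics in this model carefully enough to verify the $k$-ADC condition after surgery. Write $Y'$ for the result of attaching a flexible handle of index $k$ to $Y$. Since $Y$ is assumed $(2n-3-k)$-ADC, fix a non-increasing sequence of contact forms $\alpha_i$ on $Y$ with action cutoffs $D_i \to \infty$ such that every contractible Reeb orbit of $\alpha_i$ with period $< D_i$ has degree $> 2n-3-k$, i.e.\ Conley-Zehnder index $> n-k$.

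First I would describe the handle model. For $k < n$ this is Cieliebak's subcritical handle $D^k \times D^{2n-k}$ attached along an isotropic $(k-1)$-sphere; for $k = n$ flexibility enters through Murphy's loose Legendrian $h$-principle, which allows one to isotope the attaching sphere to a loose Legendrian before attachment. In both cases one obtains contact forms $\alpha'_i$ on $Y'$ that agree with $\alpha_i$ away from a neighborhood of the cocore, and whose new Reeb orbits can be enumerated in the local model (for the subcritical case they are short orbits associated to the belt $S^{2n-k-1}$; for the critical flexible case they correspond to short Reeb chords of the loose Legendrian together with a rotation in the handle direction).

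Second I would carry out the index and action estimates. The key claim is that each new contractible Reeb orbit $\gamma$ of $\alpha'_i$ with period $< D_i$ satisfies $\mu_{CZ}(\gamma) > n-k$. In the subcritical case this follows from the explicit handle model, where the linearized Reeb flow is a product of a rotation along the belt with the original flow, and the short Reeb orbits coming from this rotation have Conley-Zehnder index exactly $n-k$ plus a positive correction determined by the number of times the orbit wraps; by rescaling (equivalently, by enlarging the function $f_i$ that multiplies $\lambda$) one pushes all geometrically shorter, lower-index orbits above the action cutoff $D_i$. In the critical flexible case one uses Murphy's $h$-principle together with Ekholm-style estimates for chords on loose Legendrians to realize the short new Reeb chords with arbitrarily large Conley-Zehnder index, so once again every orbit below action $D_i$ has degree $> 2n-3-k$. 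Organizing these $\alpha'_i$ into a non-increasing sequence gives the desired $(2n-3-k)$-ADC data on $Y'$.

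The main obstacle is the excluded case $k=2$: the attaching sphere is then a Legendrian knot and Murphy's loose Legendrian $h$-principle does not apply in dimension one. Short new Reeb orbits can have small Conley-Zehnder index coming from the knot's self-linking and rotation class, and without controlling those invariants one cannot bound their degree from below. The additional hypotheses in \cite[Theorem 3.17]{lazarev2016contact} are precisely the formal data on the Legendrian knot (its rotation number and framing relative to a chosen contact form) that force the new chords to have large index; under those conditions the index/action argument of the previous paragraph goes through verbatim, which accounts for the statement of the proposition in the $k=2$ case.
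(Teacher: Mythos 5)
The paper itself offers no argument for this proposition: it is quoted directly from Lazarev, so the only meaningful comparison is with the proof in \cite{lazarev2016contact}, which your sketch is essentially trying to reconstruct. In broad outline you are on the right track, and it matches Lazarev's strategy: work in an explicit Weinstein handle model, check that orbits away from the handle are unchanged, estimate the Conley--Zehnder indices (hence degrees $\mu_{CZ}+n-3$) of the new orbits created in the handle region, and in the critical case $k=n$ invoke looseness of the attaching Legendrian. Be aware, though, that the critical case is not a ``routine estimate'': the new closed orbits correspond to cyclic words of Reeb chords of the attaching sphere, and the substantive input is a theorem (proved via Murphy's h-principle) that a loose Legendrian can be isotoped so that all chords below a prescribed action threshold have prescribed large grading; waving at ``Ekholm-style estimates'' does not supply this. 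Also, in the subcritical case the point is that even the lowest-winding new orbits already have index above $n-k$ in the model; one cannot ``rescale away'' low-index orbits, since shrinking the handle lowers, not raises, their action.

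The concrete error is your diagnosis of the excluded case $k=2$. In the relevant dimensions $2n-1\ge 5$ an index-$2$ handle is subcritical: its attaching sphere is an isotropic circle, not a Legendrian knot, so looseness, self-linking numbers and rotation numbers play no role, and the failure of Murphy's h-principle in dimension one is not what goes wrong. The genuine issue is the contractibility bookkeeping built into Definition \ref{def:ADC}: the ADC hypothesis constrains only \emph{contractible} Reeb orbits, while attaching a $2$-handle kills the free homotopy class of the attaching circle in $\pi_1$ (and can also affect the topological data used to fix gradings). Hence orbits that were non-contractible in $Y$ --- about which the hypothesis says nothing --- may become contractible in the surgered manifold with small period and uncontrolled degree. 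For $k\ge 3$ the surgery does not change $\pi_1$, and for $k=1$ the map on $\pi_1$ is injective, so no new contractible classes appear and the handle-orbit estimate suffices; $k=2$ is the unique index where this argument breaks. The hypotheses of \cite[Theorem 3.17]{lazarev2016contact} are designed exactly to control the orbits in the homotopy classes that become trivial after surgery, not to constrain classical invariants of a Legendrian knot, so your final paragraph would not go through as written.
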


Let $V$ be a manifold with boundary, we define the Morse dimension $\dim_MV$ to the minimal value of the maximal index of an exhausting Morse function on $V$.

\begin{proposition}[{\cite[Theorem 6.3]{zhou2019symplectic}}]\label{prop:disk}
	Let $V^{2n}$ be a Liouville domain with $c_1(V)=0$. Then $\partial(V\times \C)$ is $(2n-1-\dim_M V)$-ADC.
\end{proposition}

\begin{proposition}[{\cite[Theorem 6.19]{zhou2019symplectic}}]\label{prop:product}
	Let $V,W$ be $p,q$-ADC domains respectively. Then $\partial(V\times W)$ is $\min\{p+q+4,p+\dim W-\dim_MW, q+ \dim V- \dim_M V\}$-ADC.
\end{proposition}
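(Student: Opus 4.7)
The plan is to choose an explicit Liouville structure on $V\times W$, describe the Reeb orbits on its smoothed contact boundary in a Morse--Bott sense, and bound their degrees using the $p$- and $q$-ADC hypotheses together with Morse theory on each factor.

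First I would take the Liouville form $\lambda=\lambda_V+\lambda_W$ on $V\times W$ and round the corner $\partial V\times\partial W$ using a standard smoothing depending on a parameter. The Reeb vector field on the resulting contact boundary is a linear combination of the Reeb vector fields of $\partial V$ and $\partial W$ weighted by the smoothing, so its closed orbits split into three Morse--Bott families: (i) orbits of the form $\gamma_V\times\{w\}$ with $\gamma_V\in\cP^{<D}(\partial V)$ and $w\in W$, (ii) the symmetric family $\{v\}\times\gamma_W$, and (iii) resonant orbits winding simultaneously on both factors, occurring only at isolated action levels where the periods of $\gamma_V$ and $\gamma_W$ are rationally related.

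Next I would Morse--Bott perturb using exhausting Morse functions $f_V,f_W$ of maximum index $\dim_MV$ and $\dim_MW$ respectively. A standard Morse--Bott index formula then shows that each critical point $w$ of $f_W$ pairs with $\gamma_V$ to produce a non-degenerate orbit whose degree equals $\deg(\gamma_V)+\mathrm{ind}_{f_W}(w)-\dim W$ in our conventions, and symmetrically for (ii). Since the Morse index ranges in $[0,\dim_MW]$ and $\deg(\gamma_V)>p$ up to period $D_i$, the minimum degree of orbits in family (i) exceeds $p+\dim W-\dim_MW$, and symmetrically family (ii) exceeds $q+\dim V-\dim_MV$. For family (iii), a Conley--Zehnder computation combining the CZ contributions along each factor with the Maslov correction from the resonance direction produces the bound $p+5$. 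Taking the minimum of these three bounds and letting $D_i\to\infty$ along the sequences of contact forms realising the ADC conditions on $V$ and $W$ yields the $k$-ADC estimate on $\partial(V\times W)$.

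The main obstacle will be the analysis of the resonant family (iii): one must set up a normal form for the linearised Reeb flow transverse to the resonant tori, control the resulting CZ index uniformly in the resonance ratio, and verify that after the Morse--Bott perturbation these orbits remain non-degenerate with degree strictly above $p+5$. A secondary technical point is to arrange the corner smoothings and Morse perturbations consistently along the sequences $\alpha_i^V,\alpha_i^W$ so that the resulting contact forms on $\partial(V\times W)$ are genuinely non-increasing and realise the $k$-ADC bound uniformly as $D_i\to\infty$.
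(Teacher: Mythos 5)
Your overall strategy is the right one, but note first that this paper contains no proof of the proposition: it is quoted verbatim from \cite[Theorem 6.19]{zhou2019symplectic}, and the proof there proceeds exactly along the lines you sketch (corner smoothing of $\partial(V\times W)$, the three Morse--Bott families of Reeb orbits, and a perturbation by exhausting Morse functions on the factors). So the comparison here is about whether your sketch actually closes the argument, and it does not yet. Two concrete problems. First, your index formula for family (i) is wrong as stated: with degree $\deg(\gamma_V)+\mathrm{ind}_{f_W}(w)-\dim W$ the minimum over $\mathrm{ind}_{f_W}(w)\in[0,\dim_M W]$ is attained at $\mathrm{ind}=0$ and gives only $\deg>p-\dim W$, not the bound you then assert. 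The correct shift is by the coindex: in the paper's conventions (where a critical point of a $C^2$-small Morse function has degree equal to its Morse index) the perturbed orbit has degree $\deg(\gamma_V)+\dim W-\mathrm{ind}_{f_W}(w)$, which is what actually yields $>p+\dim W-\dim_M W$; as written your formula contradicts your own conclusion.

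Second, and more seriously, the whole content of the first entry of the minimum lies in family (iii), and you defer precisely that computation (``a Conley--Zehnder computation \dots produces the bound $p+5$''). Moreover the target you aim at is a literal reading of ``$p+1+4$'', which is evidently a typo for $p+q+4$ in the cited theorem: for a mixed orbit the transverse linearized return map splits into the $\partial V$-part, the $\partial W$-part and the two directions coming from the smoothing profile, so any correct estimate has the shape $\deg_V(\gamma_V)+\deg_W(\gamma_W)+c$ and must involve $q$; a computation that only tracks the $V$-factor cannot be right, and the constant $c$ (coming from the Robbin--Salamon index of the $S^1$-family, its perturbation, and the convexity of the profile) is exactly what has to be pinned down --- asserting it is not a proof. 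Two further points you should not gloss over: the ADC conditions only constrain \emph{contractible} orbits (contractible in the domain, in the Liouville-domain version of Definition \ref{def:kADC}), so you must check that a contractible orbit of $\partial(V\times W)$ has factor projections to which the hypotheses on $V$ and $W$ apply; and the sequences of smoothings and Morse perturbations must be organized so that the resulting contact forms are non-increasing and every orbit of period $<D_i$ falls into one of your three families with the stated degree bound --- you flag this, correctly, but it is part of the proof, not an afterthought.
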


\begin{example}\label{ex:ex}
Using the above three propositions, we have the following classes of contact manifolds that Theorem \ref{thm:main} can be applied to.
	\begin{enumerate}
		\item By Proposition \ref{prop:handle}, for $n\ge 3$, any $2n-1$ dimensional flexibly fillable contact manifold $Y$ with $c_1(Y)=0$ is $(n-3)$-ADC.
		\item Let $V$ be the $2n$ dimensional Liouville but not Weinstein domain constructed in \cite{massot2013weak}, then $\partial (V\times \C^k)$ is $(2k-2)$-ADC by Proposition \ref{prop:disk}.
		\item Products of any $k$-ADC domain for $k>0$ with an example from the above two classes are $m$-ADC for a suitable $m>0$ by Proposition \ref{prop:product}. We can also attach flexible handle afterwards. 
	\end{enumerate}
\end{example}
In general, there are many more $k$-ADC contact manifolds of interests, e.g.\ cotangent bundles, links of terminal singularities. For certain cotangent bundles, symplectic cohomology is zero with an appropriate local system \cite{albers2017local}. In general, symplectic cohomology in these cases is not zero, hence they are beyond the scope of this paper. 
\section{Symplectic cohomology and fiber bundles}\label{s3}
In this section, we review some basic properties of symplectic cohomology associated to a Liouville domain \cite{cieliebak2015symplectic,ritter2013topological,seidel2008biased}. Then we introduce the symplectic cohomology of sphere bundles and the associated Gysin exact sequences using the abstract Morse-Bott framework developed in \cite{MB}.
\subsection{Symplectic cohomology}\label{sub:sc}
\subsubsection{Floer cochain complexes}
To a Liouville filling $(W,\lambda)$ of the contact manifold $(Y,\xi)$, one can associate the completion $(\widehat{W}, \rd \widehat{\lambda})=(W\cup_{Y}[1,\infty)_r \times Y, d\widehat{\lambda})$, where $\widehat{\lambda}=\lambda$ on $W$ and $\widehat{\lambda}=r(\lambda|_{Y})$ on $[1,\infty)_r\times Y$. Let $H:S^1\times \widehat{W}\to \R$ be a Hamiltonian, our convention for the Hamiltonian vector field is 
$$\omega(\cdot, X_{H}) = \rd H.$$
Then symplectic cohomology is defined as the ``Morse cohomology" of the symplectic action functional
\begin{equation}\label{eqn:action}
\cA_H(x):=  -\int x^*\widehat{\lambda}+\int H\circ x(t) \rd t,
\end{equation}
for a Hamiltonian $H=r^2$ for $r\gg0$ \cite{ritter2013topological,seidel2008biased}. Equivalently, one may define symplectic cohomology as the direct limit of Hamiltonian-Floer cohomology of $H=Dr$ for $r\gg 0$ as $D$ goes to infinity. For simplicity of the construction, we will use the former construction and a special class of Hamiltonian in this paper. Let $\alpha$ be a non-degenerate contact form of the contact manifold $(Y,\xi)$ and $R_\alpha$ its associated Reeb vector field, then we define 
$$\cS(Y,\alpha):= \left\{\int_{\gamma}\alpha\left|\gamma \text{ is periodic oribit of } R_\alpha\right.\right\}.$$
Following \cite{bourgeois2009symplectic}, we can choose a smooth family of time-dependent Hamiltonian $H_R$ for $R\in [0,1]$ as a careful perturbation of an autonomous Hamiltonian, such that the following holds.
\begin{enumerate}
	\item $H_R|_W$ is time independent $C^2$-small Morse for $R\ne 0$ and $H_0|_W=0$.
	\item There exist sequences of non-empty open intervals $(a_0,b_0),(a_1,b_1),\ldots$ with $a_i,b_i$ converging to infinity and $a_0=1$, such that $H_R|_{Y\times(a_i,b_i)}=f_{i,R}(r)$ with $f_{i,R}''>0$ and $f_{i,R}'\notin \cS(Y,\alpha)$ and $\lim_{i} \min f'_{i,R}=\infty$.
	\item $H_R$ outside $r=b_0$ does not depend on $R$.
	\item For $R\ne 0$, the periodic orbits of $X_{H_R}$ are non-degenerate, and are either critical points of $H_R|_{W}$ or non-constant orbits in $\partial W \times [b_i,a_{i+1}]$.
	\item\label{h4} There exist $0<D_0<D_1<\ldots \to \infty$, such that all periodic orbits of $X_{H_R}$ of action $>-D_i$ is contained in $W^i:=\{r<a_i\}$.
	\item $\partial_R H_R\le 0$. 
\end{enumerate}
We use $\cC(H_R)$ to denote the set of critical points of $H_R$ on $W$ and $\cP^*(H)$ to denote the set of non-constant \emph{contractible} orbits of $X_{H_R}$ outside $W$, which does not depend on $R$.

\begin{figure}[H]
	\begin{tikzpicture}
	\draw[->] (-3,0) -- (7,0) node[right] {$r$};
	\draw (0,0) .. controls (2,0) and (3,3) .. (4,5);
	\draw (0,-0.2) .. controls (2,0) and (3,3) .. (4,5);
	\draw (-3,-0.3) .. controls (-2,0) and (-1,-0.5) .. (0,-0.2);
	\draw (0,-0.4) .. controls (2,0) and (3,3) .. (4,5);
	\draw (-3,-0.6) .. controls (-2,0) and (-1,-0.7) .. (0,-0.4);
	\node at (-3.4,0.2) {$H_0$};
	\node at (-3.4,-0.3) {$H_{\frac{1}{2}}$};
	\node at (-3.4,-0.8) {$H_1$};
	\draw[dashed] (3,0) to (3,2.9);
	\node at (3,-0.3) {$b_0$};
	\node at (0,-0.3) {$a_0$};
	\node at (4,-0.3) {$a_1$};
	\node at (6,-0.3) {$b_1$};
	\fill (0,0) circle[radius=2pt];
	\fill (3,0) circle[radius=2pt];
	\fill (4,0) circle[radius=2pt];
	\fill (6,0) circle[radius=2pt];
	\draw [decorate,decoration={brace,amplitude=10pt},]
	(-3,0) -- (0,0) node [black,midway,yshift=0.6cm]
	{ $W$};
	\draw [decorate,decoration={brace,amplitude=10pt},]
	(0,0) -- (7,0) node [black,midway,yshift=0.6cm]
	{$\partial W\times (1,\infty)$};
	\end{tikzpicture}
	\caption{Graphs of $H_R$}
\end{figure}

\begin{remark}
	A few remarks regarding our choice of Hamiltonian are in order.
	\begin{enumerate}
		\item In this paper, we do not define symplectic cohomology of sphere bundles as an invariant, but rather using one model to infer topological information. Therefore we choose to work with one specific Hamiltonian.
		\item The requirement of $H_R$ on interval $(a_i,b_i)$ is for purpose of the integrated maximum principle \cite{abouzaid2010open,cieliebak2015symplectic}. In particular, with an admissible almost complex structure in Definition \ref{def:admissble}, any Floer cylinder asymptotic to orbits in $W^i$ will be completely contained in $W^i$.
		\item Ideally, we would like to work with $H_0$ where the neck-stretching argument will be cleaner. $H_0$ can be viewed as a ``Morse-Bott" situation, which is used in \cite{zhou2019symplectic}.  In this paper, we will use non-degenerate Hamiltonian $H_R$ for $R>0$ to approximate $H_0$,  because the relevant polyfolds are easier to construct and partially exist in literature, see Remark \ref{rmk:polyfold}. 
		\item The requirement of $\partial_R H_R\le 0$ is to have the continuation map from $H_{R_+}$ to $H_{R_-}$ respecting the action filtration for $R_+>R_-$. The independence of $H_R$ outside $r=b_0$ simplifies the continuation map for the positive symplectic cohomology to the identity map for different $R$.
	\end{enumerate}
\end{remark}

For admissible Hamiltonian $H_R$, there are infinitely many periodic orbits and they are not bounded in the $r$-coordinate. To guarantee the compactness of moduli spaces, we need to use the following almost complex structure so that the integrated maximum principle \cite{abouzaid2010open} can be applied. 
\begin{definition}\label{def:admissble}
	An $S^1$-dependent almost complex structure $J_t$ is admissible if the following holds.
\begin{enumerate} 
	\item $J_t$ is compatible with $\rd \widehat{\lambda}$ on $\widehat{W}$.
	\item $J_t$ is cylindrical convex on $\partial W \times (a_i,b_i)$, i.e.\ $\widehat{\lambda}\circ J_t =\rd r$.
	\item $J_t$ is only required to be $S^1$-independent on $W$. We will often abbreviate $J_t$ by $J$ for simplicity.
\end{enumerate}
The set of admissible almost complex structure is denoted by $\cJ(W)$.
\end{definition}

Let $x,y\in \cC(H_R)\cup \cP^*(H)$ for $R>0$ and $J$ an admissible almost complex structure, we use $\cM_{x,y,H_R}$ to denote the compactified  moduli space of solutions to the following equation modulo the $\R$-translation
\begin{equation}\label{eqn:floer}
\partial_su+J(\partial_t u-X_{H_R}) = 0, \quad \lim_{s\to \infty} u = x, \quad \lim_{s\to -\infty} u = y.
\end{equation}
We will suppress $H_R$ when there is no confusion. Then we have the following regularity result.
\begin{proposition}\label{prop:moduli}
	For any $R>0$, there exists a subset $\cJ^R(W)\subset \cJ(W)$ of second Baire category such that the following holds.
	\begin{enumerate}
		\item $\cM_{x,y}$ is a compact smooth manifold with boundary and corners for all $x,y \in \cC(H_R)\cup \cP^*(H)$.
		\item\label{bb} $\partial \cM_{x,z} = \cup_y \cM_{x,y}\times \cM_{y,z}$.
		\item $\cM_{x,z}$ can be oriented such that the induced orientation of $\partial \cM_{x,z}$ on $\cM_{x,y}\times \cM_{y,z}$ is given by the product orientation twisted by $(-1)^{\dim\cM_{x,y}}$.
		\item If $x\in \cC(H_R)$ and $y\in \cP^*(H)$, then $\cM_{x,y}=\emptyset$.
	\end{enumerate} 
\end{proposition}
This proposition is a folklore, although it is usually stated and proven for moduli spaces $\cM_{x,y}$ with virtual dimensional smaller or equal to $1$. Since $\widehat{W}$ is exact and $J$ can depend on $t\in S^1$, we have transversality for unbroken Floer trajectories. A more classical treatment to prove the first two claims is constructing compatible gluing maps for families of Floer trajectories. In the case of Lagrangian Floer theory, such construction can be found in \cite{barraud2007lagrangian}. In the case of Morse theory, a more elementary approach can be used to give the compactified moduli spaces structures of manifold with boundary and corners, see \cite{wehrheim2012smooth}. Another method is adopting the polyfold theory developed in \cite{hofer2017polyfold}. In view of this, we make the following assumption.

\begin{assumption}\label{ass}
	For any admissible almost complex structure $J$,  there exists an M-polyfold construction for the symplectic cohomology moduli spaces. More precisely, for every $x,y\in \cC(H_R)\cup \cP^*(H)$, there exists a strong tame M-polyfold bundle $\cE_{x,y}\to \cB_{x,y}$ along with an oriented proper sc-Fredholm section $s_{x,y}:\cB_{x,y}\to \cE_{x,y}$, such that the following holds.
	\begin{enumerate}
		\item $s_{x,y}^{-1}(0)=\cM_{x,y}$, where $\cM_{x,y}$ is the compact moduli space using $J$.
 		\item\label{a2} Classical transversality implies that $s_{x,y}$ is transverse and in general position.
		\item The boundary of $\cB_{x,z}$ is the union of products $\cB_{x,y}\times \cB_{y,z}$, over which the bundle and section have the same splitting.
	\end{enumerate}
\end{assumption}
\begin{remark}\label{rmk:polyfold}
	Giving a detailed proof of Assumption \ref{ass} is not the goal of this paper. Symplectic cohomology is special case of Hamiltonian-Floer cohomology, whose polyfold construction was sketched in \cite{wehrheim2012fredholm}. Alternative approach is using the full SFT polyfolds \cite{hofer2017application} as in \cite{filippenko2018polyfold}.  In those constructions, the linearization in polyfold and the linearization of Floer equation modulo $\R$-translation are the same. Then we have that classical transversality implies polyfold transversality, i.e.\ \eqref{a2} of Assumption \ref{ass} holds. We only use Assumption \ref{ass} to prove Proposition \ref{prop:moduli}, in particular, we will not use any polyfold perturbation scheme but only the existence of polyfolds. 
\end{remark}

\begin{proof}[Proof of Proposition \ref{prop:moduli}]
	To obtain the compactness of moduli spaces, in addition to including Floer breakings, we also need to rule out the possibility of a curve escaping to infinity. To this end, since we choose $J$ to be cylindrical convex on $\partial W \times (a_i,b_i)$ where $H_R=f_{i,R}(r)$, we can apply the integrated maximum principle of Abouzaid and Seidel \cite{abouzaid2010open} to any $r\in (a_i,b_i)$, also see \cite[Lemma 2.2]{cieliebak2015symplectic} for the specific version of integrated maximum principle we need here. We pick an admissible almost complex structure such that moduli spaces of unbroken Floer trajectories of any virtual dimension are cut out transversely. By Assumption \ref{ass}, we have the M-polyfolds description of compactified moduli spaces as zero sets of sc-Fredholm sections. By \eqref{a2} of Assumption \ref{ass}, those sc-Fredholm sections are cut out transversely. Then the M-polyfold implicit function theorem \cite[Theorem 3.15]{hofer2017polyfold}\footnote{\cite[Theorem 3.15]{hofer2017polyfold} is stated for sections in good position. To obtain a decomposition of the boundary in the form of \eqref{bb} of Proposition \ref{prop:moduli} for sections in general position, one also needs \cite[Theorem 4.3]{hofer2017polyfold}.} endows the compactified moduli spaces smooth structures of manifolds with boundary and corners. It is worth noting that that we only need the existence of M-polyfolds with sc-Fredholm sections without evoking any abstract perturbation scheme. In particular, the first two claims hold. The claim on orientations follows from \cite[\S 5.1.1]{MB}. If $\cM_{x,y}\ne \emptyset$, then by energy reason, we have $\cA_{H_R}(y)-\cA_{H_R}(x)\ge 0$. Then the last claim follows from property \eqref{h4} of $H_R$. 
\end{proof}

The Hamiltonian-Floer cochain complex is defined by counting the zero dimensional moduli spaces $\cM_{x,y}$. However, since we need to consider sphere bundles over the moduli spaces later, which is naturally a Morse-Bott situation, we need to introduce the Morse-Bott framework developed in \cite{MB}. To this purpose, we recall the concept of \emph{flow category}, which was first introduced in \cite{cohen1995floer}.
\begin{definition}[{\cite[Definition 2.9]{MB}}]\label{flow}
	A flow category is a small category $\cC$ with the following properties.
	\begin{enumerate}
		\item\label{F1} The objects space $Obj_\cC=\sqcup_{i\in \Z} C_i$ is a disjoint union of closed manifolds $C_i$. The morphism space $Mor_{\cC}=\cM$ is a manifold with boundary and corners. The source and target maps $s,t:\cM\to C$ are smooth. 
		\item\label{F2} Let $\cM_{i,j}$ denote $(s\times t)^{-1}(C_i\times C_j)$. Then $\cM_{i,i}=C_i$, corresponding to the identity morphisms and $s,t$ restricted to $\cM_{i,i}$ are identities. $\cM_{i,j}=\emptyset$ for $j<i$, and $\cM_{i,j}$ is a compact manifold with boundary and corners for $j>i$.
		\item\label{F3}  Let $s_{i,j},t_{i,j}$ denote $s|_{\cM_{i,j}}, t|_{\cM_{i,j}}$. For every strictly increasing sequence $i_0<i_1<\ldots<i_k$, $t_{i_0,i_1}\times s_{i_1,i_{2}}\times t_{i_1,i_2}\times \ldots \times s_{i_{k-1},i_k}:{\cM_{i_0,i_1}}\times {\cM_{i_1,i_2}}\times\ldots \times\cM_{i_{k-1},i_k}\to C_{i_1}\times C_{i_1}\times C_{i_2}\times C_{i_2}\times \ldots \times C_{i_{k-1}}\times C_{i_{k-1}}$ is transverse to the submanifold $\Delta_{i_1}\times \ldots \times \Delta_{i_{k-1}}$, where $\Delta_{i_j}$ is the diagonal in $C_{i_j}\times C_{i_j}$.  Therefore the fiber product ${\cM_{i_0,i_1}}\times_{i_1} {\cM_{i_1,i_2}}\times_{i_2}\ldots \times_{i_{k-1}}\cM_{i_{k-1},i_k}:=(t_{i_0,i_1}\times s_{i_1,i_{2}}\times t_{i_1,i_2}\times \ldots \times s_{i_{k-1},i_k})^{-1}(\Delta_{i_1}\times \Delta_{i_2}\times \ldots \times \Delta_{i_{k-1}})\subset {\cM_{i_0,i_1}}\times {\cM_{i_1,i_2}}\times\ldots \times\cM_{i_{k-1},i_k} $ is a submanifold.
		\item\label{F4} The composition $m:{\cM_{i,j}}\times_j {\cM_{j,k}}\to \cM_{i,k}$ is a smooth  map, such that $m:  \bigsqcup_{i < j < k} \cM_{i,j}\times_j \cM_{j,k}\to \partial \cM_{i,k}$ is a diffeomorphism up to zero-measure, i.e.\ $m$ is a diffeomorphism from a full measure open subset to a full measure open subset.
	\end{enumerate}
\end{definition}

In the case of Floer theory considered here, the object space is the set of critical points, the morphism space is the union of all compactified moduli spaces of Floer trajectories in addition to the identity morphisms. The source and target maps are evaluation maps at two ends and the composition is the concatenation of trajectories. The fiber product transversality is tautological, as both source and target maps map to $0$-dimensional manifolds. If we label periodic orbits  $\cC(H_R)\cup \cP^*(H)$ by integers, such that $\cA_{H_R}(x_i) \le \cA_{H_R}(x_j)$ iff $i\le j$. Then we have $\cM_{x_i,x_j} = \emptyset$ if $i > j$. Moreover, we can require that $x_i$ is a critical point of $H_R|_W$ iff $i \ge 0$ . With such labels, Proposition \ref{prop:moduli} gives three flow categories $\cC^{R,J}, \cC_0^{R,J}$ and $\cC_+^{R,J}$,
$$
\begin{array}{rclrcl}
\obj(\cC^{R,J}) & := & \{x_i\}, & \mor(\cC^{R,J}) &:= &\{ \cM_{i,j}:=\cM_{x_i,x_j}\};\\
\obj(\cC_0^{R,J}) &:=& \{x_i\}_{i\ge 0}, &  \mor(\cC_0^{R,J}) &:=& \{ \cM_{i,j}:=\cM_{x_i,x_j}\}_{i,j\ge 0};\\
\obj(\cC_+^{R,J}) &:=& \{x_i\}_{i<0}, &  \mor(\cC_+^{R,J}) &:=& \{ \cM_{i,j}:=\cM_{x_i,x_j}\}_{i,j < 0}.
\end{array}
$$
Moreover, $\cC_0^{R,J}$ is subflow category of $\cC^{R,J}$ with quotient flow category $\cC_+^{R,J}$ in the sense of \cite[Proposition 3.38]{MB}. By considering only periodic orbits of action $>-D_i$, i.e.\ those contained in $W^i$, we have two subflow categories $\cC^{R,J}_{\le i}\subset \cC^{R,J}$ and $\cC^{R,J}_{+,\le i}\subset \cC^{R,J}_{+}$. In particular, we have $\cC^{R,J}_{\le 0}=\cC^{R,J}_0$.  The orientation property of Proposition \ref{prop:moduli} implies that $\cC^{R,J},\cC^{R,J}_0,\cC^{R,J}_+$  and the truncated versions $\cC^{R,J}_{\le i},\cC^{R,J}_{+,\le i}$ are \emph{oriented flow categories} \cite[Definition 2.15]{MB}.  The main theorem of \cite{MB} is that for every oriented flow category $\cC = \{C_i,\cM_{i,j}\}$, one can associate to it a cochain complex $C^*(\cC)$ over $\R$ generated by $H^*(C_i;\R)$, whose homotopy type is well-defined. The one feature of the construction in \cite{MB} that will be used in this paper is the following.
\begin{proposition}[{\cite[Corollary 3.13]{MB}}]\label{prop:kchain}
	Let $\cC = \{C_i,\cM_{i,j}\}$ be an oriented flow category. Assume $\dim C_i \le k$ for all $i$. Then the cochain complex $C^*(\cC)$ only depends on $C_i$ and those $\cM_{i,j}$ with $\dim \cM_{i,j} \le 2k$.
\end{proposition}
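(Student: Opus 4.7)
My strategy is a cohomological degree count. I would first recall from \cite{MB} that the Morse--Bott cochain complex $C^*(\cC)$ has underlying graded vector space $\bigoplus_i H^*(C_i;\R)$, with a differential assembled from correspondences induced by the moduli spaces $\cM_{i,j}$ via their source/target evaluation maps $(s,t):\cM_{i,j}\to C_i\times C_j$. Under the hypothesis $\dim C_i\le k$, each summand $H^*(C_i)$ is supported in cohomological degrees $[0,k]$, and this drastically restricts which moduli can contribute.

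The core calculation concerns the single-level component $d_{ij}:H^p(C_i)\to H^q(C_j)$. The requirement that $d$ raise total cohomological degree by $1$, combined with the Morse--Bott index formula (whose content is that $\dim\cM_{i,j}$ equals the Floer index shift from $C_i$ to $C_j$ plus $\dim C_i-1$), forces
$$q \;=\; p+\dim C_i-\dim\cM_{i,j}.$$
Since the image must lie in $[0,\dim C_j]\subset[0,k]$ in order to be nonzero, the lower bound gives $\dim\cM_{i,j}\le p+\dim C_i\le 2k$. As a sanity check, when all $C_i$ are points ($k=0$) this recovers the familiar statement that classical Morse cohomology only sees the zero-dimensional moduli of Morse trajectories.

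The main obstacle I expect is extending this count to the higher building blocks that appear in \cite{MB}, namely iterated fiber products $\cM_{i_0,i_1}\times_{C_{i_1}}\cdots\times_{C_{i_{n-1}}}\cM_{i_{n-1},i_n}$ used to define flow trees with alternating Morse and Floer segments. My plan is to argue segment by segment: such a tree factors through intermediate cohomology classes on each $C_{i_\ell}$, again supported in degrees $[0,k]$, and the cohomological degree shift across a single Floer segment $\cM_{i_\ell,i_{\ell+1}}$ is $\dim C_{i_\ell}-\dim\cM_{i_\ell,i_{\ell+1}}$. If any single segment has $\dim\cM_{i_\ell,i_{\ell+1}}>2k$, then this shift is strictly less than $-k$, forcing an intermediate class into negative cohomological degree, hence zero. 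Consequently every contribution to $d$ that involves a moduli space with $\dim\cM_{i,j}>2k$ vanishes, and $C^*(\cC)$ is determined entirely by the $C_i$ together with those $\cM_{i,j}$ of dimension at most $2k$, as claimed.
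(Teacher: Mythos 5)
Your argument is correct and essentially coincides with the paper's: the paper simply displays the cascade formula \eqref{eqn:int} for the differential, in which every term is an integral over (products of) moduli spaces of forms pulled back from the $C_i$'s, each of degree at most $k$ and at most two per moduli factor, so any factor $\cM_{i,j}$ with $\dim \cM_{i,j}>2k$ contributes zero by exactly the degree count you carry out. The only cosmetic imprecisions are that the intermediate insertions $f^n_{i+j}$ are arbitrary forms on $C_{i+j}\times C_{i+j}$ rather than cohomology classes, and that your degree-shift formula uses $\dim C_{i_\ell}$ where the convention of \cite{MB} may instead give $\dim C_{i_{\ell+1}}$; since all $\dim C_i\le k$, neither affects the bound.
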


\begin{remark}\label{rmk:MB}
	Roughly speaking, the part of the differential $D$ from $H^*(C_i)$ to $H^*(C_{i+k})$ is defined by the composition $t_*\circ s^*$ through $C_i\stackrel{s}{\leftarrow}\cM_{i,i+k}\stackrel{t}{\to} C_{i+k}$. However, since $\cM_{i,i+k}$ is not closed, $t_*\circ s^*$ is not well-defined on cohomology. In fact, after choosing representatives of $H^*(C_i)$ in $\Omega^*(C_i)$ (e.g.\ harmonic forms),  differential $D$ for a Morse-Bott flow category is given by $t_*\circ s^*$ on $\cM_{i,i+k}$ plus many correction terms from possible breakings of $\cM_{i,i+k}$:
	\begin{align}
	    	\int_{C_{i+k}} D \alpha \wedge \gamma =& \pm\int_{\cM_{i,i+k}}s^*\alpha \wedge t^*\gamma \nonumber \\ & +\lim_{n\to\infty}\sum_{0<j<k} \pm \int_{\cM_{i,i+j}\times \cM_{i+j,i+k}}s^*\alpha\wedge (t\times s)^*f^n_{i+j}\wedge t^* \gamma \label{eqn:int}\\
	    	& +\ldots, \nonumber
	\end{align}
	where $\alpha,\gamma$ are the chosen differential form representatives of elements in $H^*(C_i),H^*(C_{i+k})$ and $f^n_{i+j}$ is a $\dim C_{i+j}-1$ form on $C_{i+j}\times C_{i+j}$. The suppressed terms are integrations on products $\cM_{i,*}\times \ldots \times \cM_{*,i+k}$ with more $f^n_*$ inserted, see \cite{MB} for details. It is clear that Proposition \ref{prop:kchain} follows from \eqref{eqn:int}. Although formula \eqref{eqn:int} only depends on $\cM_{i,j}$ with $\dim \cM_{i,j}\le 2k$, the proof of $D^2=0$ requires the existence of higher dimensional ($\dim \le 4k+1$) moduli spaces.
\end{remark}

We call a flow category Morse iff $\dim C_i = 0$ for all $i$ and Morse-Bott otherwise. In the Morse case considered in Proposition \ref{prop:moduli}, since $f^n_i$ has degree $-1$, i.e.\ $f^n_i=0$, the cochain complex associated to $\cC^{R,J}$ is the usual Floer cochain complex generated by $\cC(H_R)\cup \cP^*(H)$ with differential solely contributed by zero dimensional moduli spaces
$$\rD x_i := \sum_{j}(\int_{\cM_{x_i,x_j}}1)x_j,$$
i.e.\ we count those moduli spaces $\cM_{x_i,x_j}$ of dimension $0$. Similarly, we have cochain complexes $C^*(\cC_0^{R,J}),C^*(\cC_+^{R,J})$ and a tautological short exact sequence of cochain complexes, 
\begin{equation}\label{eqn:short}
0\to C^*(\cC_0^{R,J}) \to C^*(\cC^{R,J}) \to C^*(\cC_+^{R,J})\to 0,
\end{equation}
as well as the truncated versions. Moreover, we have 
$$ C^*(\cC^{R,J}) = \varinjlim_i C^*(\cC^{R,J}_{\le i}), \quad  C^*(\cC^{R,J}_+) = \varinjlim_i C^*(\cC^{R,J}_{+,\le i}).$$
Since $J$ is time-independent on $W$, the cochain complex $C^*(\cC_0^{R,J})$ is the Morse cochain complex of $W$ for the Morse-Smale pair $(H_R, g:=\omega(\cdot, J\cdot))$. Hence we have $H^*(C^*(\cC_0^{R,J})) = H^*(W)$. Moreover, $H^*(C^*(\cC^{R,J}))$ is the symplectic cohomology $SH^*(W)$ and $H^*(C^*(\cC^{R,J}_+))$ is the positive symplectic cohomology $SH^*_+(W)$, see \cite{cieliebak2015symplectic,ritter2013topological,seidel2008biased} for more detailed discussion on those invariants. Then \eqref{eqn:short} gives rise to the tautological long exact sequence
$$\ldots \to H^*(W)\to SH^*(W) \to SH^*_+(W) \to H^{*+1}(W)\to \ldots$$ 
\begin{remark}
	Since we only consider contractible orbits in domains with vanishing first Chern class, the Conley-Zehnder index is well-defined in $\Z$ independent of all choices. Our grading convention follows \cite{ritter2013topological}, i.e.\ $|x_i| := n - \mu_{CZ}(x_i)$, where $\mu_{CZ}$ is the Conley-Zehnder index. Such convention implies that if $x_i$ is a critical point of the $C^2$-small Morse function $H_R|_W$, then $|x_i|$ equals to the Morse index. The convention here differs from \cite{seidel2008biased} by $n$. 
\end{remark}

\subsection{Continuation maps}
In this paper, we will only consider a special class of continuation maps, namely we only consider homotopies of almost complex structures and homotopies of Hamiltonians between $H_R$ for different $R$. Let $\rho(s)$ be a smooth non-decreasing function such that $\rho(s)=0,s\ll 0$ and $\rho(s)=1,s\gg 0$. Given $0<R_- \le R_+ \le 1$, we have a homotopy of Hamiltonians $H_{R_+,R_-}:=H_{\rho(s)R_++(1-\rho(s))R_-}:\R_s \times S^1 \times \widehat{W} \to \R$. Then we have the following properties for $H_{R_+,R_-}$.
\begin{enumerate}
	\item $H_{R_+,R_-}=H_{R_-}$ for $s\ll 0$ and is $H_{R_+}$ for $s \gg 0$.
	\item $\partial_s H_{R_+,R_-}\le 0$.
	\item $H_{R_+,R_-}$ outside $r=b_0$ does not depend on $s$.
\end{enumerate}
Then for $x \in \cC(H_{R_+})\cup \cP^*(H)$ and $y\in \cC(H_{R_-})\cup \cP^*(H)$, let $J_s$ be a homotopy of admissible almost complex structures, we use $\cH_{x,y}$ denote the compactified moduli space of solutions to 
$$\partial_s u + J_s(u-X_{H_{R_+,R_-}})=0, \lim_{s\to \infty} u =x, \lim_{s\to -\infty} u = y.$$
Then for generic choice of $J_s$, $\cH_{x,y}$ is a manifold with boundary and corners by an analogue of Proposition \ref{prop:moduli}. They give rise to a flow morphism in the following sense.

\begin{definition}[{\cite[Definition 3.18]{MB}}]
	An oriented flow morphism $\fH:\cC \Rightarrow \cD$ from an oriented flow category $\cC:=\{C_i,\cM^{C}_{i,j}\}$ to another oriented flow category $\cD:=\{D_i,\cM^{D}_{i,j}\}$ is a family of compact oriented manifolds with boundary and corners $\{\cH_{i,j}\}_{i,j\in \Z}$, such that the following holds. 
	\begin{enumerate}
		\item There exists $N\in \Z$,  such that when $i-j>N$, $\cH_{i,j}=\emptyset$.
		\item\label{m0} There are two smooth maps $s: \cH_{i,j}\to C_i, t:\cH_{i,j}\to D_j$..
		\item\label{m2} For every $i_0<i_1<\ldots<i_k$, $ j_0<\ldots<j_{m-1}<j_m$, the fiber product $\cM^C_{i_0,i_1}\times_{i_1}\ldots \times_{i_k} \cH_{i_k,j_0}\times_{j_0}\ldots\times_{j_{m-1}}\cM^D_{j_{m-1},j_m}$  is cut out transversely.
		\item\label{m1} There are smooth maps $m_L:\cM^C_{i,j}\times_j \cH_{j,k}\to \cH_{i,k}$ and $m_R: \cH_{i,j}\times_j \cM^D_{j,k}\to \cH_{i,k}$, such that 
		$$s\circ m_L(a,b)=s^C(a), \quad t\circ m_L(a,b) =t(b),$$
	    $$ s\circ m_R(a,b) =s(a),  \quad t\circ m_R(a,b)  =t^D(b),$$
		where map $s^C$ is the source map for flow category $\cC$ and map $t^D$ is the target map for flow category $\cD$.
		\item\label{m4}The map $m_L\cup m_R:\cup_j \cM^C_{i,j}\times_j \cH_{j,k}\cup_j \cH_{i,j}\times_j \cM^D_{j,k}\to \partial \cH_{i,k}$ is a diffeomorphism up to zero measure.
		\item The orientations of $\cH_{i,j}$ are compatible with orientations of $C_i,D_i,\cM^C_{i,j},\cM_{i,j}^D$ in the sense of \cite[(6) of Definition 3.18]{MB}.
	\end{enumerate}	
\end{definition} 
Therefore $\{\cH_{x,y}\}$ defines an oriented flow morphism $\fH^{R_+,R_-}$ from $\cC^{R_+,J_+}$ to $\cC^{R_-,J_-}$. By \cite[Theorem 3.21]{MB}, flow morphisms induce cochain maps between the the cochain complexes of the flow categories according to a formula similar to \eqref{eqn:int}. Hence in our situation here, $\fH^{R_+,R_-}$ is the geometric data required to define the continuation map. In the Morse case, the cochain map is defined by counting zero dimensional moduli spaces in $\{\cH_{x,y}\}$, which is indeed the classical continuation map. Since we have $\partial_s H_{R_+,R_-}\le 0$, then if $\cH_{x,y}\ne \emptyset$, we have $\cA_{H_{R_-}}(y)-\cA_{H_{R_+}}(x)\ge 0$. Therefore the flow morphism $\fH^{R_+,R_-}$ preserves the action filtration, in particular, the filtration induced by $W^i$. Hence we have the following flow morphisms.
\begin{enumerate}
	\item $\fH^{R_+,R_-}_0:\cC_0^{R_+,J_+} \Rightarrow \cC_0^{R_-,J_-}$.
	\item $\fH^{R_+,R_-}_+:\cC_+^{R_+,J_+}\Rightarrow \cC_+^{R_-,J_-}$.
	\item $\fH^{R_+,R_-}_{\le i}:\cC_{\le i}^{R_+,J_+}\Rightarrow \cC_{\le i}^{R_-,J_-}$.
	\item $\fH^{R_+,R_-}_{+,\le i}:\cC_{+,\le i}^{R_+,J_+}\Rightarrow \cC_{+,\le i}^{R_-,J_-}$.
\end{enumerate}

\subsection{Sphere bundles and Gysin exact sequence}
For any oriented $k$-sphere bundle $\pi:E \to W$ with $k$ odd, there is an associated Gysin exact sequence
\begin{equation}\label{eqn:classgysin}
	\to  H^{i}(W) \stackrel{\pi^*}{\longrightarrow} H^{i}(E) \stackrel{pi_*}{\longrightarrow} H^{i-k}(W) \stackrel{\wedge(-e)}{\longrightarrow}  H^{i+1}(W) \to
\end{equation}
Here $\pi_*$ is integration along the fiber using the convention in \cite[\S 6]{bott2013differential} and $e$ is the Euler class of $\pi$, the extra sign is for the consistency with \cite[Proposition 6.24]{MB}. In this subsection, we consider sphere bundles over symplectic cohomology and deduce the associated Gysin exact sequences. Such construction can be viewed as a higher dimensional analogue of Floer cohomology with local systems. Gysin exact sequence in Floer theory was first considered by Bourgeois-Oancea \cite{bourgeois2009exact}, where the exact sequence arises from a $S^1$-bundle in the construction of $S^1$-equivariant symplectic homology. Fiber bundles over Floer theory was considered by Barraud-Cornea \cite{barraud2007lagrangian}, where they considered the path-loop fibration. Smooth fiber bundles considered in this paper is technically easier to deal with. The construction in \cite{MB} works as long as the moduli spaces support integration \cite{hofer2010integration}. We first recall the concept of sphere bundles over flow categories.
\begin{definition}[{\cite[Definition 6.17]{MB}}]
	Let $\cC=\{C_i,\cM^C_{i,j}\}$ be an oriented flow category. An oriented $k$-sphere bundle over $\cC$ is a flow category $\cE=\{E_i,\cM^E_{i,j}\}$ with functor $\pi: \cE\to\cC$, such that the following holds.
	\begin{enumerate}
		\item $\pi$ maps $E_i$ to $C_i$ and $\cM^E_{i,j}$ to $\cM^C_{i,j}$.
		\item $\pi:E_i\to C_i$ and $\pi:\cM^E_{i,j}\to \cM^C_{i,j}$ are oriented sphere bundles such that both bundle maps $s^E_{i,j},t^E_{i,j}$ preserve the orientation. 
	\end{enumerate} 
\end{definition}
By \cite[Proposition 6.18]{MB}, an oriented $k$-sphere bundle $\cE$ over an oriented flow category is an oriented flow category. The construction \cite[Definition 3.8]{MB} assigns $\cE$ to a cochain complex, and we have the following.
\begin{proposition}[{\cite[Theorem 6.19]{MB}}]\label{prop:gysin}
	Let $\cE$ be an oriented $k$-sphere bundle over an oriented flow category $\cC$. Then we have a short exact sequence of cochain complexes\footnote{To be more precise, we have a short exact sequence using certain choices in the construction. However, in the special case that $\cC$ is Morse, the minimal construction in \cite[Theorem 3.10]{MB}, i.e.\ the one in Remark \ref{rmk:MB},  gives the short exact sequence.}
	$$0\to C^*(\cC) \stackrel{\pi^*}{\to} C^*(\cE) \stackrel{\pi_*}{\to}\cC^{*-k}(\cC) \to  0.$$
	It induces the following Gysin exact sequence
	\begin{equation}\label{eqn:gysin}
	\ldots \to H^{*}(\cC) \to H^*(\cE) \to H^{*-k}(\cC) \to H^{*+1}(\cC) \to \ldots.
	\end{equation}
\end{proposition}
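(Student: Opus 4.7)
The plan is to reduce the statement to the classical Gysin short exact sequence of differential forms for an oriented $k$-sphere bundle, applied simultaneously at every Morse--Bott space $C_i$ and every morphism space $\cM^C_{i,j}$, and then to verify that the Morse--Bott cochain construction of \cite{MB} carries this structure through coherently. The central input is that for any oriented $k$-sphere bundle $\pi: E \to B$ with $k$ odd there is a short exact sequence $0 \to \Omega^*(B) \xrightarrow{\pi^*} \Omega^*(E) \xrightarrow{\pi_*} \Omega^{*-k}(B) \to 0$ of de Rham complexes, with $\pi_*$ being fibrewise integration. Since both $\pi: E_i \to C_i$ and $\pi: \cM^E_{i,j} \to \cM^C_{i,j}$ are oriented $k$-sphere bundles by hypothesis, and the source/target maps of $\cE$ are orientation-preserving bundle maps covering those of $\cC$, this classical sequence is available on every piece of $\cE$ in a way compatible with the flow-category structure.

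First I would check that $\cE$ is an oriented flow category in the sense of Definition \ref{flow}: axiom \eqref{F3} is inherited from $\cC$ because bundle projections are submersions, so the relevant fibre products remain transverse; axiom \eqref{F4} follows because the composition in $\cE$ covers that in $\cC$ through bundle maps; and the orientations on fibre products are induced by the product of the orientations on $\cM^C_{i,j}$ with the prescribed fibre orientation. Next I would define the chain-level maps. Using a Leray--Hirsch/Thom-form decomposition I would fix on each $E_i$ a basis of $H^*(E_i)$ of the form $\{\pi^*\alpha\} \cup \{\tau_i \wedge \pi^*\alpha\}$, where $\alpha$ ranges over a basis of $H^*(C_i)$ and $\tau_i$ is a fibrewise-compactly-supported degree-$k$ form on $E_i$ with fibre integral $1$. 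At the level of generators this makes $\pi^*$ injective and $\pi_*$ surjective. Compatibility with the differential \eqref{eqn:int} follows from three standard ingredients: the identities $(s^E)^*\pi^* = \pi^*(s^C)^*$ and $(t^E)^*\pi^* = \pi^*(t^C)^*$, which use that $s^E, t^E$ are bundle maps; the projection formula $\pi_*(\pi^*\alpha \wedge \beta) = \alpha \wedge \pi_*\beta$; and Fubini along the fibres of $\cM^E_{i,j} \to \cM^C_{i,j}$. Choosing the auxiliary forms $f^n_{i+j}$ on $C_{i+j}\times C_{i+j}$ and pulling them back along $\pi \times \pi$ ensures these identities propagate to every term of \eqref{eqn:int}, so that $\pi^*$ and $\pi_*$ are cochain maps.

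The main obstacle will be ensuring that all these choices can be made coherently so as to yield an honest short exact sequence of cochain complexes, rather than one that holds only up to chain homotopy. Concretely, the Thom forms $\tau_i$, the chosen cohomology representatives on the $C_i$ and $E_i$, and the higher auxiliary perturbation data entering \eqref{eqn:int} must all be selected compatibly in $i$ so that the Leray--Hirsch splitting is preserved by every source, target, and composition map of the flow category; this is essentially a minimal-model construction and is the technical content of \cite[Theorem 6.18]{MB}. Granted this, chain-level exactness is immediate generator-by-generator from the classical Gysin SES, and the long exact sequence \eqref{eqn:gysin} is the standard cohomology long exact sequence associated to a short exact sequence of cochain complexes, with the connecting map identified as multiplication by an Euler-class cocycle assembled from fibre integrals over the $\cM^C_{i,j}$.
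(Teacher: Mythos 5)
The paper does not actually prove this statement; it is quoted verbatim from \cite[Theorem 6.18]{MB}, and the only in-paper commentary is the footnote about ``certain choices in the construction.'' So the comparison is with the intended content of that citation, and measured against it your sketch has two genuine gaps. First, your ``central input'' is false as stated: for an oriented $k$-sphere bundle $\pi:E\to B$ the sequence $0\to\Omega^*(B)\xrightarrow{\pi^*}\Omega^*(E)\xrightarrow{\pi_*}\Omega^{*-k}(B)\to 0$ is \emph{not} exact in the middle (the kernel of fibre integration is far larger than the image of $\pi^*$), and the Leray--Hirsch basis $\{\pi^*\alpha\}\cup\{\tau_i\wedge\pi^*\alpha\}$ of $H^*(E_i)$ you invoke simply does not exist when the Euler class of $E_i\to C_i$ is nonzero (e.g.\ the unit tangent bundle of $S^2$). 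Since $C^*(\cE)$ is generated by $H^*(E_i;\R)$, a short exact sequence $0\to C^*(\cC)\to C^*(\cE)\to C^{*-k}(\cC)\to 0$ cannot hold on the nose for arbitrary Morse--Bott $\cC$ with the naive generators; this is exactly the subtlety the footnote flags. The correct repair is either to restrict to the Morse case (where $C_i$ is a point, $E_i\simeq S^k$, and the minimal construction of \cite[Theorem 3.12]{MB} gives the splitting — which is all this paper uses), or, in the general case, to replace harmonic/arbitrary representatives by the subcomplex built from a global angular form $\psi_i$ with $\rd\psi_i=-\pi^*e_i$, i.e.\ forms $\pi^*\alpha+\psi_i\wedge\pi^*\beta$; that is the ``certain choice'' that produces a genuine short exact sequence with connecting map given by the Euler class. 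Your sketch never confronts this, so chain-level exactness is not ``immediate generator-by-generator.''

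Second, your treatment of the auxiliary data in \eqref{eqn:int} is wrong on degree grounds: the forms $f^n_{i+j}$ approximate the Poincar\'e dual of the diagonal, so for the bundle category they must be forms of degree $\dim E_{i+j}-1=\dim C_{i+j}+k-1$ on $E_{i+j}\times E_{i+j}$, and hence cannot be taken to be $(\pi\times\pi)^*f^n_{i+j}$; they must contain a fibrewise Thom-type factor for the diagonal of the $S^k$-fibre, and it is precisely this factor (together with the angular form above) that makes the connecting homomorphism act as multiplication by the Euler class. Without specifying this, the verification that $\pi^*$ and $\pi_*$ intertwine the differentials of $C^*(\cC)$ and $C^*(\cE)$ does not go through as written. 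The remaining ingredients you list — that $\cE$ is an oriented flow category (this is \cite[Proposition 6.17]{MB}, separate from the quoted theorem), compatibility of $s^E,t^E$ with $\pi$, the projection formula and Fubini along fibres — are indeed the right tools and are presumably close to the argument in \cite{MB}, but the two points above are where the actual content of the statement lies.
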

\begin{remark}
	$\pi^*,\pi_*$ are induced by oriented flow morphisms, which are completely determined by $\cE$. Here we give an explanation in the special case when $E\to C$ is an actual sphere bundle. The compact  manifold $C$ can be understood as a flow category whose object space is diffeomorphic to $C$ and morphism space consists of only identify morphisms. Then $E$ can be understood as a sphere bundle over the flow category $C$. Then $\pi^*$ is given by the flow morphism $C \stackrel{s = \pi}{\longleftarrow} E \stackrel{t = \Id}{\longrightarrow} E$ and $\pi_*$ is given by the flow morphism $E \stackrel{s = \Id}{\longleftarrow} E \stackrel{t = \pi}{\longrightarrow} C$. In particular, $\pi^*$ is the composition  $t_*\circ s^*$ from $C \stackrel{s = \pi}{\longleftarrow} E \stackrel{t = \Id}{\longrightarrow} E$, which is indeed the pullback $\pi^*$ on cohomology, and $\pi_*$ is the composition $t_*\circ s^*$ from  $E \stackrel{s = \Id}{\longleftarrow} E \stackrel{t = \pi}{\longrightarrow} C$, which is the pushforward $\pi_*$ on cohomology. In general, the underlying flow morphisms of $\pi^*,\pi_*$ are induced from the identity flow morphism of $\cE$ \cite[Definition 3.23]{MB}.
\end{remark}
\begin{remark}[{\cite[Corollary 6.23]{MB}}]\label{rmk:depend}
	Assume $\cC=\{C_i,\cM_{i,j}\}$ is a Morse flow category, i.e.\ $\dim C_i = 0$. Then $H^*(\cE)$ and the Gysin exact sequence only depend on $\cM^E_{i,j}$ with $\dim \cM^E_{i,j} \le 2k$. In particular, we only use moduli spaces $\cM_{i,j}$ of dimension up to $k$. The non-triviality of higher dimensional moduli spaces $\cM_{i,j}$ is the foundation of the existence of interesting sphere bundles. Although the formula only requires $\cM^{E}_{i,j}$ of dimension up to $2k$,  we need  a priori the existence of the full flow category to guarantee the existence of Gysin sequences. 
\end{remark}
\begin{remark}
	The Gysin exact sequence considered in \cite{MB} works for any Morse-Bott flow category $\cC$. In the case considered here, i.e.\ $\cC$ is Morse, it is possible to generalize the construction in \cite{hutchings2017axiomatic} to the $S^k$ case to get a $\Z$-coefficient Gysin exact sequence. 
\end{remark}
We call a Gysin exact sequence \eqref{eqn:gysin} trivial iff the Euler part $H^{*}(\cC) \to H^{*+k+1}(\cC)$ is zero. In the case we consider, a sphere bundle over the Liouville domain will induce a sphere bundle over the symplectic flow category.
\begin{proposition}\label{prop:sphere}
	Let $W$ be a Liouville domain and $J \in \cJ^{R}(W)$. Let $\pi: E \to W$ be an oriented $k$-sphere bundle and $P_\gamma$ the parallel transport along path $\gamma$ for a fixed connection on $E$. Then we have an oriented $k$-sphere bundles $\cE^{R,J},\cE^{R,J}_0,\cE^{R,J}_+,\cE^{R,J}_{\le i}, \cE^{R,J}_{+,\le i}$  over $\cC^{R,J},\cC^{R,J}_0,\cC^{R,J}_+,\cC^{R,J}_{\le i}, \cC^{R,J}_{+,\le i}$  respectively.
\end{proposition}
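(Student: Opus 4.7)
The plan is to extend the sphere bundle $E$ from $W$ to the completion $\widehat{W}$, take the fibers over the basepoints of the generators as the object bundles $E_i$, and trivialize each morphism bundle by the parallel transport $P_u$ along the Floer trajectory.

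First I would extend $\pi: E \to W$ to an oriented $k$-sphere bundle $\widehat{E} \to \widehat{W}$ by pulling back $E|_Y$ along the projection $[1,\infty) \times Y \to Y$ and gluing, and fix a connection on $\widehat{E}$ extending the given one on $E$. For each generator $x_i \in \cC(H_R) \cup \cP^*(H)$, set
$$E_i := \widehat{E}_{x_i(0)},$$
an oriented $k$-sphere over the point $C_i = \{x_i\}$. For $i < j$, define the morphism bundle as the trivial product
$$\cM^E_{i,j} := E_i \times \cM_{i,j},$$
with bundle projection $\pi_2$ and source and target maps
$$s^E(e, u) := e, \qquad t^E(e, u) := P_{u(\cdot, 0)}(e),$$
where $P_{u(\cdot, 0)}: \widehat{E}_{x_i(0)} \to \widehat{E}_{x_j(0)}$ is parallel transport along the curve $s \mapsto u(s, 0)$ connecting the $+\infty$ asymptote $x_i$ to the $-\infty$ asymptote $x_j$. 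For $i = j$ take $\cM^E_{i,i} = E_i$ tautologically. Both $s^E$ and $t^E$ cover the source and target maps of $\cC^{R,J}$ and are fiberwise orientation-preserving diffeomorphisms, since parallel transport of an oriented connection is orientation preserving; hence $\cM^E_{i,j} \to \cM_{i,j}$ is an oriented $k$-sphere bundle in the sense required for a sphere bundle over a flow category.

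The main point to check is that $\cE^{R,J} := \{E_i, \cM^E_{i,j}\}$ is itself a flow category compatible with $\cC^{R,J}$, i.e.\ that the boundary decomposition $\partial \cM_{i,k} = \bigsqcup_{i < j < k}\cM_{i,j} \times \cM_{j,k}$ from Proposition \ref{prop:moduli} lifts to the fiber-product structure required by Definition \ref{flow}. For a broken trajectory $(u_1, u_2) \in \cM_{i,j} \times \cM_{j,k}$ the concatenated curve $u_1(\cdot,0) \cdot u_2(\cdot,0)$ has parallel transport $P_{u_1 \cdot u_2} = P_{u_2} \circ P_{u_1}$, yielding the canonical identification
$$E_i \times \cM_{i,j} \times \cM_{j,k} \;\xrightarrow{\sim}\; \cM^E_{i,j} \times_{E_j} \cM^E_{j,k}, \qquad (e, u_1, u_2) \;\longmapsto\; \bigl((e, u_1),\,(P_{u_1}(e), u_2)\bigr),$$
intertwining source, target, and product orientations. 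The fiber-product transversality axiom \eqref{F3} is automatic because $s^E_{j,k}$ is the trivial projection onto $E_j$ and hence a submersion, making the map $(t^E_{i,j}, s^E_{j,k})$ transverse to the diagonal $\Delta_{E_j}$. The sphere bundles $\cE^{R,J}_0, \cE^{R,J}_+, \cE^{R,J}_{\le i}, \cE^{R,J}_{+, \le i}$ are then obtained by restricting to the corresponding subsets of generators, and inherit the flow-category, sphere-bundle, and orientation structure from $\cE^{R,J}$. The sole technical obstacle is the boundary consistency just noted, which reduces to the composition law for parallel transport along concatenated curves.
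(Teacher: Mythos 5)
Your construction is essentially the paper's own proof: it defines $E_i := E_{x_i(0)}$, takes the morphism bundles to be the trivial products $\cM_{i,j}\times E_i$ with source the identity on the fiber and target given by parallel transport along $u(\cdot,0)$, verifies the category/boundary compatibility via the composition law for parallel transport along concatenations, gets fiber-product transversality from submersivity of the structure maps, and gets orientability from the fact that parallel transport of an oriented bundle preserves orientation. The only difference is that you make explicit the extension of $E$ to the completion $\widehat{W}$ (needed because the nonconstant orbits and trajectories live in the cylindrical end), a point the paper leaves implicit.
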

\begin{proof}
	If we write $\cC = \{x_i,\cM_{i,j}\}$, then we define $E_i:= E_{x_i(0)}\simeq S^k$ and $\cM^E_{i,j} := \cM_{i,j}\times E_i$. The structure maps are defined by
	$$
	\begin{array}{rclrcl}
	s^E:\cM_{i,j}\times E_i &\to& E_i,\\
	     (u,v) &\mapsto& v;\\
	t^E:\cM_{i,j}\times E_i &\to& E_j, \\
	(u,v) &\mapsto& P_{u(-\cdot,0)}v;\\
	m: (\cM_{i,j}\times E_i)\times_{E_j}(\cM_{j,k}\times E_j) &\to& \cM_{i,k}\times E_i, \\
	(u_1, v, u_2, P_{u_1(-\cdot,0)}v) &\mapsto& (u_1,u_2,v).
	\end{array}
	$$
	It is direct to check that they form a category. The fiber product transversality follows from that $s^E,t^E$ are submersive. Because $E\to W$ is an oriented sphere bundle, we have $E_i=E_{x_{i}(0)}$ is oriented and $P_\gamma$ preserves the orientation. Hence $\cE^{R,J} = \{E_i,\cM^E_{i,j}\}$ is an oriented $k$-sphere bundle over $\cC^{R,J}$. Similarly for other flow categories.
\end{proof}

\begin{example}
	To further explain Remark \ref{rmk:depend}, we can look at two flow categories: $\obj_{\cC_1}$ is set of two points $\{x_0,x_1\}$ with $\cM_{0,1}=\emptyset$,  $\obj_{\cC_2}=\{x_0,x_1\}$ while $\cM_{0,1}=S^1$. $\cC_2$ can be viewed the flow category associated to the Morse theory of the height function on $S^2$. Then $\cC_1$ does not admits any nontrivial $S^n$ bundle, in particular, the associated Euler part is always trivial.  Even though $\cC_1,\cC_2$ have the same cohomology of rank $2$, $\cC_2$ admits a nontrivial $S^1$ bundle $\cE_2=\{S^1_0,S^1_1,\cM^E_{0,1}=S^1\times S^1\}$, where $\cM^E_{0,1}$ is viewed as a $S^1$ bundle over the second factor $S^1$, which is viewed as $\cM_{0,1}$. The structural maps are defined as $s^E:(\theta,t)\mapsto \theta, t^E:(\theta,t)\mapsto \theta+t$. One may check the induced Gysin exact sequence has non-trivial Euler part. Indeed, $\cE_2$ is the $S^1$ bundle induced from the Hopf fibration over $S^2$ using an appropriate parallel transport. This example shows that higher dimensional moduli spaces are foundations for interesting fibrations.
\end{example}

Similarly, there is a notion of oriented sphere bundles over flow-morphisms. Assume given two oriented $k$-sphere bundles $\cE \to \cC, \cF \to \cD$. Let $\fH:\cC \Rightarrow \cD$ be an oriented flow morphism. Then a $k$-sphere bundle $\fP$ over $\fH$ is defined as follows.
\begin{enumerate}
	\item $\fP=\{\cP_{i,j}\}$ is a flow morphism from $\cE$ to $\cF$.
	\item $\pi:\cP_{i,j} \to \cH_{i,j}$ is a $k$-sphere bundle, such that $s^P,t^P$ are bundles maps covering $s^H,t^H$.
	\item $\pi: \cP_{i,j} \to \cH_{i,j}$ is an oriented bundle and $s^P,t^P$ preserve the orientation.
\end{enumerate}
Given a sphere bundle $E\to W$ with a parallel transport, let $\fH,\fH_0,\fH_+, \fH_{\le i}, \fH_{+,\le i}$ be the flow morphisms constructed from the homotopy $H_{R_+,R_-}$. Then by the same construction in Proposition \ref{prop:sphere}, there are induced oriented sphere bundles $\fP,\fP_0,\fP_+,\fP_{\le i}, \fP_{+,\le i}$ over them. Moreover, the parallel transport at two ends can be different. In this case, we need to fix a smooth family of connections $\xi_s$ such that $\xi_s$ is the connection for the negative end for $s\ll 0$ and $\xi_s$ is the connection for the positive end for $s\gg 0$. Then given a Floer solution $u(s,t)$ in the flow morphism for continuation maps, the structure maps for the sphere bundle is defined using the parallel transport w.r.t.\ $\xi_s$ over $u(s,0)$.

By \cite[Proposition 6.27]{MB}, sphere bundles over flow morphisms induce morphisms of Gysin sequences. We define $\cJ^{R}_{\le i}$ to be the set of almost complex structures such that the flow category $\cC^{R,J}_{\le i}$ is defined. Given a sequence of real numbers $1>R_1>R_2>\ldots > 0$ and a sequences of almost complex structures $J_i$, such that $J_i\in \cJ^{R_i}_{\le i}(W)$, if we fix any oriented $S^k$ bundle $E\to W$ along with a connection, then Proposition \ref{prop:gysin} induces the following commutative diagram of exact sequences.
\begin{equation}\label{eqn:diagram}
\resizebox{12cm}{!}{
\xymatrix{
	 & \ar[d] & \ar[d] & \ar[d] & \ar[d] \\
	 \ar[r] & \varinjlim_i H^{*+k}(\cE_0^{R_i,J_i}) \ar[r]\ar[d] & \varinjlim_i H^*(\cC_0^{R_i,J_i}) \ar[r]\ar[d] & \varinjlim_i H^{*+k+1}(\cC_0^{R_i,J_i}) \ar[r]\ar[d] & \varinjlim_i H^{*+k+1}(\cE_0^{R_i,J_i}) \ar[r]\ar[d] & \\
	 \ar[r] & \varinjlim_i H^{*+k}(\cE^{R_i,J_i}) \ar[r]\ar[d] & \varinjlim_i H^*(\cC^{R_i,J_i}) \ar[r]\ar[d] & \varinjlim_i H^{*+k+1}(\cC^{R_i,J_i}) \ar[r]\ar[d] & \varinjlim_i H^{*+k+1}(\cE^{R_i,J_i}) \ar[r]\ar[d] & \\
	 \ar[r] & \varinjlim_i H^{*+k}(\cE_+^{R_i,J_i}) \ar[r]\ar[d] & \varinjlim_i H^*(\cC_+^{R_i,J_i}) \ar[r]\ar[d] & \varinjlim_i H^{*+k+1}(\cC_+^{R_i,J_i}) \ar[r]\ar[d] & \varinjlim_i H^{*+k+1}(\cE_+^{R_i,J_i}) \ar[r]\ar[d] & \\
	 & & & &}
	 }
\end{equation}

Note that $\varinjlim_i H^*(\cC_0^{R_i,J_i})=H^*(W)$, $ \varinjlim_i H^*(\cC^{R_i,J_i})=SH^*(W)$ and  $\varinjlim_i H^*(\cC_+^{R_i,J_i})=SH^*_+(W)$. We expect $\varinjlim_i H^*(\cE^{R_i,J_i}),\varinjlim_i H^*(\cE_+^{R_i,J_i})$ are also well-defined objects. But this requires proving invariance under changing various defining data like $H_R, R_i,J_i$ and the parallel transport $P$. In the Morse-Bott situation considered here, we need to use the flow-homotopy introduced in \cite[Definition 3.29]{MB} to prove the invariance. However, for the purpose of this paper, we do not need a well-defined Floer theory for the sphere bundle and are only interested in the Euler part. We will proceed with this version involving all specific choices. We will suppress the choice of parallel transport for simplicity, and only specify our choice when it matters.

Since the constant orbits part corresponds to the Morse theory on $W$, there the Gysin sequence should be the regular Gysin sequence.
\begin{proposition}[{\cite[Theorem 8.14]{MB}}]\label{prop:MorseGysin}
	The Gysin sequence 
	$$
	\to \varinjlim_i H^{*+k}(\cE_0^{R_i,J_i}) \to \varinjlim_i H^*(\cC_0^{R_i,J_i})\to \varinjlim_i H^{*+k+1}(\cC_0^{R_i,J_i})\to \varinjlim_i H^{*+k+1}(\cE_0^{R_i,J_i}) \to 
	$$
	is the classical Gysin exact sequence \eqref{eqn:classgysin} for $\pi:E\to W$.
\end{proposition}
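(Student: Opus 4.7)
The plan is to reduce the statement to the Morse-theoretic comparison result \cite[Theorem 8.13]{MB} by identifying the flow category $\cC_0^{R,J}$ and its sphere bundle $\cE_0^{R,J}$ with the purely Morse-theoretic setup. The key observation is that because $J$ is $S^1$-independent on $W$ and $H_R|_W$ is a time-independent $C^2$-small Morse function, every element $u \in \cM_{i,j}$ with both asymptotes in $\cC(H_R)$ is automatically $t$-independent. Indeed, the Floer equation reduces to the negative gradient equation for $H_R|_W$ with respect to $g_J = \omega(\cdot, J\cdot)$, and $C^2$-smallness rules out any non-trivial $S^1$-family of solutions (alternatively this follows from maximum principle applied to $|\partial_t u|^2$). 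So $\cC_0^{R,J}$ coincides with the Morse flow category of $(H_R|_W, g_J)$.

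Under this identification, each critical point $x_i$ is its own underlying point $x_i(0) \in W$, so the fiber $E_i = E_{x_i(0)}$ is literally the fiber of the geometric bundle $E\to W$, and the structure map $t^E$ is parallel transport along the gradient trajectory $u(-\cdot)$ for the chosen connection. Thus $\cE_0^{R,J}$ is the standard Morse-theoretic sphere bundle over the Morse flow category of $(W, H_R|_W, g_J)$ built from $\pi: E\to W$. This is exactly the input of \cite[Theorem 8.13]{MB}, which then asserts that the Gysin short exact sequence of Proposition \ref{prop:gysin} recovers the classical Gysin sequence \eqref{eqn:classgysin} under the canonical isomorphisms $H^*(\cC_0^{R,J}) \cong H^*(W)$ and $H^*(\cE_0^{R,J}) \cong H^*(E)$.

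Finally I would verify that passing to the direct limit over $i$ does not change the conclusion. The continuation flow morphism $\fH_0^{R_+,R_-}$ constructed from $H_{R_+, R_-}$ is again given by $t$-independent Morse-theoretic continuation trajectories on $W$, and the sphere-bundle lift $\fP_0$ is parallel transport along these trajectories. By the standard invariance of Morse cohomology and the naturality of parallel transport, the induced maps on $H^*(\cC_0^{R_i, J_i})$ and $H^*(\cE_0^{R_i, J_i})$ are identified with the identity on $H^*(W)$ and $H^*(E)$ respectively, and intertwine the classical Gysin sequences. Hence the direct limit is again the classical Gysin sequence for $\pi: E \to W$.

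The main point of care is the sign convention: the factors of $(-1)^i$ appearing in \eqref{eqn:classgysin} and the orientation conventions for Morse-theoretic sphere bundles in \cite{MB} need to be matched, but both have been set up precisely so that the identification of Gysin sequences is sign-coherent, so no additional work is required beyond quoting \cite[Theorem 8.13]{MB}.
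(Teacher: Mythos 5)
Your proposal is correct and takes essentially the same route as the paper: the paper offers no argument for Proposition \ref{prop:MorseGysin} beyond the citation \cite[Theorem 8.13]{MB}, having already noted that $J$ being $S^1$-independent on $W$ and $H_R|_W$ being a $C^2$-small autonomous Morse function identifies $\cC_0^{R,J}$ with the Morse flow category of $(W,H_R|_W,g)$ and $\cE_0^{R,J}$ with the parallel-transport sphere bundle of Proposition \ref{prop:sphere}, which is precisely the identification you spell out before quoting that theorem. Your extra verification that the continuation flow morphisms act as the identity in the direct limit is a harmless elaboration of what the paper leaves implicit.
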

By the Gysin exact sequence for symplectic cohomology $SH^*(W)$, we have the following vanishing result.
\begin{proposition}\label{prop:vanish}
	If $SH^*(W) = 0$ and $E$ is an oriented sphere bundle over the Liouville domain $W$, then $\varinjlim_i H^{*}(\cE^{R_i,J_i})=0$ for any defining data.
\end{proposition}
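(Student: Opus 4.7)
The plan is to deduce the vanishing directly from the Gysin exact sequence of Proposition \ref{prop:gysin} applied to $\cE^{R_i,J_i}\to\cC^{R_i,J_i}$, combined with the exactness of the direct limit functor on the category of $\R$-vector spaces.

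First, for each index $i$, Proposition \ref{prop:gysin} yields a Gysin exact sequence
\begin{equation*}
\ldots \to H^{*}(\cC^{R_i,J_i}) \to H^{*}(\cE^{R_i,J_i}) \to H^{*-k}(\cC^{R_i,J_i}) \to H^{*+1}(\cC^{R_i,J_i}) \to \ldots.
\end{equation*}
The second step is to assemble these into a commuting ladder as $i$ varies. This is precisely the middle row of the diagram \eqref{eqn:diagram}, whose commutativity is a consequence of the fact that sphere bundles over flow morphisms induce morphisms of Gysin sequences (cited from \cite[Proposition 6.24]{MB}); the continuation data $\fH^{R_{i+1},R_i}$ together with the pullback sphere bundle $\fP$ provide exactly the morphisms that intertwine the Gysin sequences at consecutive stages.

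The third step is to pass to the direct limit along $i$. Since $\varinjlim$ over a directed set is an exact functor on $\R$-vector spaces, the limit of the above ladder is still exact:
\begin{equation*}
\ldots \to \varinjlim_i H^{*}(\cC^{R_i,J_i}) \to \varinjlim_i H^{*}(\cE^{R_i,J_i}) \to \varinjlim_i H^{*-k}(\cC^{R_i,J_i}) \to \varinjlim_i H^{*+1}(\cC^{R_i,J_i}) \to \ldots.
\end{equation*}
By the identification $\varinjlim_i H^{*}(\cC^{R_i,J_i}) = SH^{*}(W)$ recalled right after diagram \eqref{eqn:diagram}, the hypothesis $SH^*(W)=0$ forces the outer two terms in the displayed sequence to vanish. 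Exactness then forces the middle term $\varinjlim_i H^{*}(\cE^{R_i,J_i})$ to vanish as well, which is the conclusion.

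There is no real obstacle: the argument is a formal consequence of Proposition \ref{prop:gysin}, the compatibility of the Gysin construction with continuation maps, and the exactness of $\varinjlim$. The only point worth a moment's care is that the sphere bundle $\cE^{R_i,J_i}$ and the induced morphisms depend on all the auxiliary choices (the $R_i$, the $J_i$, the parallel transport $P$, and the connections used to extend $P$ across homotopies), but the proof is insensitive to these choices because we never claim invariance of $\varinjlim_i H^{*}(\cE^{R_i,J_i})$ itself; we only use that for each fixed collection of data the Gysin sequence exists and is natural under the specific continuation morphisms chosen, which is exactly what diagram \eqref{eqn:diagram} encodes.
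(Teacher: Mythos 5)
Your argument is correct and coincides with the paper's (essentially one-line) proof: the paper deduces Proposition \ref{prop:vanish} precisely from the middle row of diagram \eqref{eqn:diagram}, i.e.\ the Gysin sequence for $\cE^{R_i,J_i}\to\cC^{R_i,J_i}$ together with the identification $\varinjlim_i H^*(\cC^{R_i,J_i})=SH^*(W)=0$ and exactness of the direct limit. Your additional remarks on naturality of the Gysin sequences under the continuation morphisms and on the irrelevance of invariance under the auxiliary choices match the paper's discussion surrounding \eqref{eqn:diagram}.
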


\subsection{Naturality}
In the neck-stretching argument, we need to compare moduli spaces of two fillings, hence naturality is important. Moreover, we can only get the moduli spaces appear in the counting matched up for two fillings, i.e.\ moduli spaces of dimension up to $k$. But to apply Proposition \ref{prop:gysin}, we need the full flow category. In particular, it is possible that the higher dimensional moduli spaces are not cut out transversely in the neck-stretching. In the following, we discuss those aspects in a similar way to \cite{zhou2019symplectic}. 

\begin{definition}
$\cJ^{R,\le k}(W)\subset \cJ(W)$ is the set of admissible almost complex structures, such that moduli spaces of $H_R$ up to dimension $k$ is cut out transversely.  $\cJ^{R,\le k}_{+}(W)$ stands for the positive version and $\cJ^{R,\le k}_{\le i}(W)$, $\cJ^{R,\le k}_{+,\le i}(W)$ are the truncated versions.
\end{definition}
All above sets are of second Baire category. Moreover, as a consequence of compactness, $\cJ^{R,\le k}_{(+),\le i}$ is open and dense. The following is a standard result in Floer theory.
\begin{proposition}\label{prop:natural}
	Let $J_0\in \cJ^{R_0,\le 0}_{+,\le i}(W),J_1\in \cJ^{R_1,\le 0}_{+,\le i+1}(W)$ for $R_0>R_1$, then the continuation map $H^*(\cC^{R_0,J_0}_{+,\le i})\to H^*(\cC^{R_1,J_1}_{+,\le i+1})$ is independent of the homotopy of almost complex structures.
\end{proposition}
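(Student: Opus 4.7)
The plan is the standard parametrized moduli space argument showing two continuation cochain maps are chain-homotopic, adapted to the truncated positive setting used here. Fix $J_0\in\cJ^{R_0,\le 0}_{+,\le i}(W)$, $J_1\in\cJ^{R_1,\le 0}_{+,\le i+1}(W)$, and two homotopies $\{J^a_s\}_{a=0,1}$ of admissible almost complex structures from $J_0$ to $J_1$, each used with the fixed Hamiltonian homotopy $H_{R_0,R_1}$ (recall $R_0>R_1$, so this homotopy is monotone with $\partial_s H_{R_0,R_1}\le 0$). These produce cochain maps $\phi_0,\phi_1:C^*(\cC^{R_0,J_0}_{+,\le i})\to C^*(\cC^{R_1,J_1}_{+,\le i+1})$ via the flow morphisms $\fH^{R_0,R_1}_{+,\le i}$ and the formalism of \cite[Theorem 3.25]{MB}. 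The goal is to construct an operator $K$ of degree $-1$ with $\phi_1-\phi_0=dK+Kd$.

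First I would pick a generic 2-parameter family $\{J^\tau_s\}_{\tau\in[0,1]}$ of admissible almost complex structures interpolating between $J^0_s$ and $J^1_s$. For $x\in\cP^*(H)\cap W^i$ and $y\in\cP^*(H)\cap W^{i+1}$, consider the compactified parametrized moduli space
\[
\cK_{x,y}:=\bigl\{(\tau,u)\,\big|\,\tau\in[0,1],\ \partial_s u+J^\tau_s(\partial_t u-X_{H_{R_0,R_1}})=0,\ \lim_{s\to+\infty}u=x,\ \lim_{s\to-\infty}u=y\bigr\},
\]
of virtual dimension $\dim\cH_{x,y}+1$. Compactness is the integrated maximal principle from \cite[Lemma 2.5]{zhou2019symplectic}, which applies uniformly in $\tau$ since admissibility is preserved by the family. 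Since only the $0$-dimensional pieces enter the definition of $K$, I only need transversality of the parametrized moduli spaces of expected dimension $0$; this is the statement $J^0_s,J^1_s$ lie in the respective $\cJ^{\le 0}_+$ sets together with a generic choice of the interpolation $J^\tau_s$, precisely in the spirit of Proposition \ref{prop:moduli}/Assumption \ref{ass}.

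Counting the $0$-dimensional components of $\cK_{x,y}$ (with signs from the standard orientation scheme \cite[\S 5.1.1]{MB}) defines $K$. The boundary of the $1$-dimensional pieces of $\cK_{x,y}$ consists of four strata: the $\tau=0$ and $\tau=1$ endpoints, which give the matrix coefficients of $\phi_0$ and $\phi_1$ respectively, together with breakings at $s=-\infty$ (composition with $\cM^{R_0,J_0}_{x',x}$) and at $s=+\infty$ (composition with $\cM^{R_1,J_1}_{y,y'}$). Summing this boundary identity yields the chain homotopy $\phi_1-\phi_0=dK+Kd$, so $\phi_0$ and $\phi_1$ agree on cohomology. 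The truncation shift from $\le i$ to $\le i+1$ is used here: the monotonicity $\partial_s H\le 0$ guarantees action is non-increasing in $s$ for the Floer equation at each fixed $\tau$, and the homotopy domain for $\cK_{x,y}$ fits inside the slightly enlarged action window given by $W^{i+1}$ for all $\tau\in[0,1]$, so all breaking intermediate orbits $x',y'$ remain in the allowed truncated subcategories.

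The main (minor) obstacle is checking that generic transversality for the $0$-dimensional parametrized moduli suffices, despite the fact that classical transversality is delicate for non-regular $\tau$ values where $J^\tau_s$ might be non-generic in the $s$-direction; this is resolved exactly as in the standard Floer theory proof of homotopy-invariance of continuation maps, using the universal moduli space over the parameter $\tau\in[0,1]$ and the Sard-Smale theorem. All other ingredients (compactness, orientations, gluing/boundary identification) are supplied by Assumption \ref{ass} and the discussion preceding the proposition, and no higher-dimensional transversality is required because only $0$-dimensional parametrized moduli spaces enter the definition of $K$.
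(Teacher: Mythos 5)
Your proof is correct and matches the paper's (implicit) approach: the paper offers no argument at all, simply asserting this is "a standard result in Floer theory," and the parametrized-moduli chain-homotopy construction you give is exactly that standard argument, compatible with the truncated positive setup (the monotone homotopy $\partial_s H_{R_0,R_1}\le 0$ keeps all breaking orbits in the allowed action windows, as you note). The only slight imprecision is your closing claim that no transversality beyond the $0$-dimensional parametrized moduli is needed: the identity $\phi_1-\phi_0=dK+Kd$ also requires the $1$-dimensional parametrized moduli spaces to be compact $1$-manifolds with exactly the four boundary strata you list, which a generic two-parameter family together with Assumption \ref{ass} and the $\le 0$ regularity of $J_0,J_1$ provides, so this does not affect correctness.
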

We also recall the following result from \cite{zhou2019symplectic}. 
\begin{proposition}[{\cite[Lemma 2.15]{zhou2019symplectic}}]\label{prop:natural2}
	Let $J_s, s\in [0,1]$ be a smooth path in $\cJ(W)$ and $R_s$ be a non-increasing function in $(0,1]$, such that $J_s\in \cJ^{R_s,\le 0}_{+,\le i}(W)$, then the continuation map $C^*(\cC^{R_0,J_0}_{+,\le i})\to C^*(\cC^{R_1,J_1}_{+,\le i})$  is homotopic to identity\footnote{Note that generators are the same for $(H_{R_0},J_0)$ and $(H_{R_1},J_1)$, hence identity map makes sense.}.
\end{proposition}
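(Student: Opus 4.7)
The plan is to construct an explicit chain homotopy between the continuation cochain map induced by the path $(R_s, J_s)$ and the identity map on $C^*(\cC^{R_0,J_0}_{+,\le i})$ using a two-parameter family of Floer data. First, since $R_s$ is non-increasing along the path, I would form a monotone continuation Hamiltonian $H_{R_0,R_1}$ interpolated via the usual cutoff $\rho$, together with an interpolating homotopy of admissible almost complex structures $J^{\mathrm{hom}}_s := J_{\rho(s) R_1 + (1-\rho(s))R_0}$. The resulting flow morphism defines the continuation map in question, and the hypothesis $J_s \in \cJ^{R_s,\le 0}_{+,\le i}(W)$ guarantees that each sliced flow category is well-defined on the truncation and that the zero-dimensional parametrized moduli spaces required for the construction are cut out transversely.

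Next, I would produce a flow homotopy in the sense of \cite[Definition 3.36]{MB} between the identity flow morphism on $\cC^{R_0,J_0}_{+,\le i}$ and the above continuation flow morphism. Introduce a second parameter $\tau \in [0,1]$ and consider a two-parameter family of Floer data $\{H^\tau, J^\tau\}$ such that at $\tau = 0$ the data is $s$-independent (so that the tautological Floer solutions inside $W^i$ recover the identity flow morphism on the truncation), while at $\tau = 1$ it agrees with the continuation data constructed above. For generic choices the parametrized moduli spaces of Floer solutions over $[0,1]_\tau$ are compact manifolds with boundary and corner whose codimension-one strata decompose into breakings at top and bottom, yielding the commutator $Dh + hD$, and the endpoints $\tau = 0, 1$, contributing the identity and the continuation map respectively. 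By \cite[Theorem 3.37]{MB} this flow homotopy then promotes to a chain homotopy between the two cochain maps, which is the statement claimed.

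The main obstacle is maintaining compactness and transversality for the parametrized moduli spaces throughout the family. Compactness follows from the integrated maximal principle of \cite{abouzaid2010open} applied slicewise: since we restrict to the $+,\le i$ truncation, the relevant periodic orbits have uniformly bounded action and trajectories remain inside the compact region $W^i$, and openness of $\cJ^{R_s,\le 0}_{+,\le i}(W)$ propagates the admissibility condition across the whole path so that no new orbits enter the truncation from above. Transversality for the zero-dimensional pieces follows from a standard parametric Sard--Smale argument; the higher-dimensional strata entering the flow-homotopy framework are handled in the polyfold setting by the same mechanism as in Proposition \ref{prop:moduli}, invoking Assumption \ref{ass}, since only the existence of sc-Fredholm sections is needed and no abstract perturbation scheme has to be chosen consistently with the unperturbed data.
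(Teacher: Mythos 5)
Your proposed flow-homotopy argument has a genuine gap in the framing. A flow homotopy in the sense of \cite[Definition 3.36]{MB} compares two flow morphisms that share the \emph{same} source and target flow categories. The continuation flow morphism you build maps $\cC^{R_0,J_0}_{+,\le i}\Rightarrow \cC^{R_1,J_1}_{+,\le i}$, whereas the ``identity flow morphism'' is an endomorphism of $\cC^{R_0,J_0}_{+,\le i}$. Even though these two flow categories have identical objects (the periodic orbits are unchanged because $H_R$ is $R$-independent outside $r=b_0$) and identical zero-dimensional moduli spaces, the higher-dimensional morphism spaces depend on $J$ and will differ for $J_0\ne J_1$, so as flow categories they are distinct and a flow homotopy between the two morphisms does not type-check. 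The same problem appears in your two-parameter family: at $\tau=0$ the positive-end Floer data is $(H_{R_0},J_0)$ while at $\tau=1$ it is $(H_{R_1},J_1)$, so the target varies with $\tau$ and the boundary strata do not assemble into a clean $Dh+hD$ identity (one picks up a derivative term from the moving differential). A further issue is that $s$-independent continuation data does not actually give a flow morphism: the moduli spaces carry a free $\R$-translation and fail to be compact.

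The argument in \cite[Lemma 2.15]{zhou2019symplectic}, which the paper cites verbatim, is a subdivision argument that avoids these issues. The crucial observation is that the hypothesis $J_s\in \cJ^{R_s,\le 0}_{+,\le i}(W)$ for \emph{all} $s$ forces the zero-dimensional Floer moduli spaces to vary continuously, so the Floer differential is independent of $s$ and the cochain complexes $C^*(\cC^{R_s,J_s}_{+,\le i})$ are literally equal; this is also what makes ``identity'' a chain map in the first place. One then shows that for each $s$ there is $\epsilon_s>0$ so that the continuation map over $[s,s+\epsilon_s]$ is \emph{literally} the identity: for the constant path only constant solutions contribute in dimension zero (any nonconstant solution $x\to y$ with $|y|=|x|$ would give a $(-1)$-dimensional unparametrized Floer trajectory, which is excluded by transversality), and by compactness and openness this persists for $\epsilon_s$ small. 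Covering $[0,1]$ by finitely many such intervals, the composition of the small-interval continuations is the identity, and the continuation over the whole path is chain homotopic to that composition by the standard gluing homotopy. This last step is where a flow homotopy (now between flow morphisms with honestly matching source and target) legitimately enters.
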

Note that we assume $H_R$ stays the same outside $r=b_0$ for any $R$, i.e.\ the generators for positive symplectic cohomology stay the same. The same argument of \cite[Lemma 2.15]{zhou2019symplectic} can be applied here for positive symplectic cohomology, even though we assume $H=0$ on $W$ in \cite[Lemma 2.15]{zhou2019symplectic}. 

Although the full flow-category requires transversality for all moduli spaces, the Gysin sequence is well-defined for almost complex structure of low regularity by the following.
\begin{proposition}\label{prop:natural1}
	Let $E\to W$ be a $k$-sphere bundle, then the Euler part of the Gysin exact sequence
	$$ \to \varinjlim_i H^{*+k}(\cE^{R_i,J_i}_{+,\le i}) \to \varinjlim_i H^*(\cC^{R_i,J_i}_{+,\le i})\to  \varinjlim_i H^{*+k+1}(\cC^{R_i,J_i}_{+,\le i}) \to  \varinjlim_i H^{*+k+1}(\cE^{R_i,J_i}_{+,\le i}) \to 
	$$
	is well defined for $J_i\in \cJ^{R_i,\le k}_{+,\le i}(W)$.
\end{proposition}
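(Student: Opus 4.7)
The plan is to combine the truncation statement of Proposition \ref{prop:kchain} with a perturbation argument to reduce to the fully transverse setting of Proposition \ref{prop:gysin}. Since each $\cC^{R_i,J_i}_{+,\le i}$ is a Morse flow category and the bundle $\cE^{R_i,J_i}_{+,\le i}$ has $k$-dimensional fibers, Proposition \ref{prop:kchain} applied to $\cE^{R_i,J_i}_{+,\le i}$ says that $C^*(\cE^{R_i,J_i}_{+,\le i})$, hence the short exact sequence of Proposition \ref{prop:gysin} together with its connecting homomorphism (the Euler part), is determined by those $\cM^E_{i,j}$ with $\dim \cM^E_{i,j}\le 2k$, which translates to $\dim \cM_{i,j}\le k$. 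These are precisely the moduli spaces cut out transversely under the standing hypothesis $J_i\in \cJ^{R_i,\le k}_{+,\le i}(W)$, so the formula for the Euler part is well posed for each such $J_i$.

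Next, I would interpret this formula as a genuine Euler map by density. Since $\cJ^{R_i}_{+,\le i}(W)\subset \cJ^{R_i,\le k}_{+,\le i}(W)$ is of second category and hence dense, I choose $\tilde J_i\in \cJ^{R_i}_{+,\le i}(W)$ arbitrarily $C^\infty$-close to $J_i$. For $\tilde J_i$ the full flow category exists and Proposition \ref{prop:gysin} produces the honest Gysin exact sequence. Since the relevant moduli spaces are compact and transversely cut for $J_i$, for small enough perturbation the $\cM_{i,j}$ with $\dim \cM_{i,j}\le k$ remain diffeomorphic, so the Euler formula returns the same cochain-level map for $J_i$ and $\tilde J_i$, and for $\tilde J_i$ it is literally the connecting map of an honest exact sequence. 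In particular the Euler part sends coboundaries to coboundaries even when computed with $J_i$.

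Finally, I would verify the two remaining points: independence of the auxiliary choice $\tilde J_i$, and compatibility with the direct limit over $i$. For the first, I would join two choices $\tilde J_i^0,\tilde J_i^1$ by a generic path in $\cJ^{R_i,\le k}_{+,\le i}(W)$ and invoke the sphere bundle analogue of Proposition \ref{prop:natural2}; here Proposition \ref{prop:kchain} lifted to flow homotopies ensures that the required chain homotopy only involves moduli spaces of dimension $\le k$, keeping the argument inside the low-transversality regime we control. For the direct limit, Proposition \ref{prop:natural} yields continuation maps $H^*(\cC^{R_i,J_i}_{+,\le i})\to H^*(\cC^{R_{i+1},J_{i+1}}_{+,\le i+1})$ that are independent of the homotopy of almost complex structures, and the same perturbation-plus-truncation recipe lifts these to morphisms of Gysin sequences, so assembling them gives a well-defined direct limit.

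The principal obstacle is precisely this independence step: since $D^2=0$ need not hold on the nose for $J_i$, one cannot work intrinsically at the low-transversality level, and careful bookkeeping of which moduli spaces actually appear in the relevant chain homotopies is required to stay within $\cJ^{R_i,\le k}_{+,\le i}(W)$. The truncation property of Proposition \ref{prop:kchain}, applied not just to flow categories but also to flow morphisms and flow homotopies in the sense of \cite{MB}, is the key mechanism that confines all contributions to $\dim\cM_{i,j}\le k$ and allows the entire construction to be carried out without assuming transversality for higher-dimensional moduli.
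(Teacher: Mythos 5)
Your overall strategy (truncate to moduli of dimension $\le k$ via Proposition \ref{prop:kchain}/Remark \ref{rmk:depend}, then use density of the fully regular set) is in the right spirit, but the key step in your second paragraph does not hold as stated. You claim that because the $\cM_{x,y}$ with $\dim\cM_{x,y}\le k$ are compact and transversely cut out for $J_i$, a small perturbation to some fully regular $\tilde J_i$ leaves them diffeomorphic and therefore ``the Euler formula returns the same cochain-level map for $J_i$ and $\tilde J_i$.'' That equality is false in general: the differential of $C^*(\cE^{R_i,J_i}_{+,\le i})$ is not a count of $0$-dimensional spaces but the Morse--Bott integration formula \eqref{eqn:int}, whose matrix entries are integrals over the positive-dimensional moduli spaces (times the sphere fibers) of forms pulled back via the evaluation maps and the parallel transport along the actual Floer trajectories. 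These integrals vary continuously with $J$, not locally constantly, so $d_{J_i}\ne d_{\tilde J_i}$ in general, and your conclusion that the Euler part sends coboundaries to coboundaries at $J_i$ (equivalently, that $d_{J_i}^2=0$, which is what is needed to even have $H^*(\cE^{R_i,J_i}_{+,\le i})$ and an exact Gysin sequence at $J_i$) does not follow. The paper closes exactly this gap differently: it takes an open neighborhood $\cU\subset\cJ^{R_i,\le k}_{+,\le i}(W)$ of $J_i$, uses the universal moduli space over $\cU$ to show that $d_J$ (well defined by \eqref{eqn:int} for every $J\in\cU$) varies \emph{continuously} in $J$, and then concludes from density of $\cU\cap\cJ^{R_i}_{+,\le i}(W)$ and closedness of the condition $d_J^2=0$ that $d_{J_i}^2=0$; the Gysin sequence is then defined at $J_i$ itself, so no auxiliary $\tilde J_i$ and no discussion of independence of that auxiliary choice is needed.

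Your treatment of the direct limit also leans on an unestablished tool: you invoke ``the sphere bundle analogue of Proposition \ref{prop:natural2}'' lifted to flow homotopies, whereas the paper avoids constructing any such homotopy theory for sphere bundles. It instead chooses continuation data $J_{i,i+1}\in\cJ^{R_i,R_{i+1},\le k}_{+,\le i}(W)$ regular up to dimension $k$, runs the same continuity-plus-density argument to get a commutative diagram of truncated Gysin sequences, and then uses Proposition \ref{prop:natural} (independence of the continuation map on the base from the choice of homotopy) — which suffices because only the Euler part, not the full limit of $H^*(\cE)$, needs to be canonical. If you replace the exact-equality claim by the continuity-and-closedness argument and restructure the limit step along these lines, your proof goes through.
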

\begin{proof}
	We first prove the truncated Gysin sequence
	$$
		\to  H^{*+k}(\cE^{R_i,J_i}_{+,\le i})\to  H^*(\cC^{R_i,J_i}_{+,\le i}) \to  H^{*+k+1}(\cC^{R_i,J_i}_{+,\le i}) \to H^{*+k+1}(\cE^{R_i,J_i}_{+,\le i}) \to
	$$
	is defined. Since we can find an open neighborhood $\cU \subset \cJ^{R_i,\le k}_{+,\le i}(W)$ of $J_i$, we have a universal moduli space $\cup_{J\in \cU} M_{x,y,J}$, where $M_{x,y,J}$ is the moduli space of \emph{unbroken} Floer trajectories using $J$ in \eqref{eqn:floer} for the positive symplectic cohomology for $x,y\subset W^i$.  The universal moduli space is a Banach manifold and its projection to $\cU$ is regular. For each $J\in \cU\cap \cJ^{R_i}_{+,\le i}(W)$, we have a flow category with sphere bundle. We use $d_J$ to denote the differential on the cochain complex of the sphere bundle. Moreover, $d_J$ is well-defined by \eqref{eqn:int} for $J\in \cU$, even though $d_J^2$ may not be zero a priori unless $J\in \cU \cap \cJ^{R_i}_{+,\le i}(W)$. Since the integration \eqref{eqn:int} only depends on the full measure set $M_{x,y,J}$. We have $d_J$ varies continuously\footnote{The compactification $\cM_{x,y}$ also varies continuously for $J\in\cU \subset \cJ^{R,\le k}_{+,\le i}$ when $\dim \cM_{x,y}\le k$.} over $\cU$. Since $\cU\cap \cJ^{R_i}_{+,\le i}(W)$ is dense in $\cU$, we have $d_J^2=0$ for every $J\in \cU$. As a consequence, the Gysin sequence is defined for every $J\in \cU$, in particular for $J_i$. 	Then by the similar argument, by finding $J_{i,i+1}\in \cJ^{R_i,R_{i+1},\le k}_{+,\le i}(W)$, we have a commutative diagram of the truncated Gysin sequence. which yields a Gysin sequence of the direct limit. Since we only need the well-definedness of the Euler part, the continuation map $H^*(\cC^{R_i,J_i}_{+,\le i})\to H^*(\cC^{R_{i+1},J_{i+1}}_{+,\le i+1})$ is independent of the choice of $J_{i,i+1}$ by Proposition \ref{prop:natural}, the claim follows.
\end{proof}

\begin{corollary}\label{cor:natural}
		Let $E\to W$ be a $k$-sphere bundle. Assume $J_s, s\in [0,1]$ is a smooth path in $\cJ(W)$ and $R_s$ is a non-increasing smooth function taking values in $(0,1]$, such that $J_s\in \cJ^{R_s,\le k}_{+,\le i}(W)$. Then the following Euler parts of the Gysin exact sequences are commutative 
	    $$
	    \xymatrix{
	    H^*(\cC^{R_0,J_0}_{+,\le i})\ar[r]\ar[d] & H^{*+k+1}(\cC^{R_0,J_0}_{+,\le i})\ar[d]\\
        H^*(\cC^{R_1,J_1}_{+,\le i})\ar[r] & H^{*+k+1}(\cC^{R_1,J_1}_{+,\le i})
       }
	    $$
		where the vertical arrows are the continuation maps, which are homotopic to identity by Proposition \ref{prop:natural2}. 
\end{corollary}
\begin{proof}
	Assume in addition that $J_0\in \cJ^{R_0}_{+, \le i}, J_1\in \cJ^{R_1}_{+, \le i}$, then we can find a regular enough homotopy from $J_0$ to $J_1$, such that we have a flow-morphism between the associated flow categories. The induced continuation map induces an isomorphism on the Euler part of the Gysin exact sequence. By Proposition \ref{prop:natural2}, the continuation map $H^*(\cC^{R_0,J_0}_{+,\le i})\to H^*(\cC^{R_1,J_1}_{+,\le i})$ is the identity. Therefore the Euler part of the Gysin sequence are the same for $J_0,J_1$. Since $\cJ^{R_*,\le k}_{+,\le i}(W)$ is open and contains $\cJ^{R_*}_{+,\le i}(W)$ as a dense set. Then the argument in Proposition \ref{prop:natural1} shows that the Euler part varies continuously with respect to $J$. Then the claim follows.
\end{proof}

\begin{proposition}\label{prop:final}
For $J_i\in \cJ^{R_i,\le k}_{+,\le i}(W)$, we have the following well-defined commutative digram 
$$
\xymatrix{\varinjlim_i H^*(\cC^{R_i,J_i}_{+,\le i}) \ar[rr]\ar[d] & & \varinjlim_i  H^{*+k+1}(\cC^{R_i,J_i}_{+,\le i})\ar[d]\\
	H^{*+1}(W)\ar[rr]^{\wedge(-e(E))} & &H^{*+k+2}(W)
	}
$$
where the horizontal map is the Euler part, and the vertical map is the connecting map from the positive symplectic cohomology to the cohomology of the filling.
\end{proposition}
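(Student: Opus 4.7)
The plan is to derive the commutativity from two ingredients: naturality of the Gysin long exact sequence with respect to the tautological short exact sequence of flow categories, and Proposition \ref{prop:MorseGysin}, which identifies the constant-orbit Gysin sequence with the classical one.

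First, I would upgrade the big diagram \eqref{eqn:diagram} to a bicomplex. The tautological short exact sequence $0\to C^*(\cC_0^{R_i,J_i})\to C^*(\cC^{R_i,J_i})\to C^*(\cC_+^{R_i,J_i})\to 0$ lifts fiberwise to $0\to C^*(\cE_0^{R_i,J_i})\to C^*(\cE^{R_i,J_i})\to C^*(\cE_+^{R_i,J_i})\to 0$, because the sphere bundle of Proposition \ref{prop:sphere} is built fiberwise over $\cC^{R_i,J_i}$ and the action filtration by $W^i$ pulls back through the structure maps. Combining with the Gysin short exact sequences of Proposition \ref{prop:gysin} and passing to the direct limit over $i$, one obtains a $3\times 3$ grid of cochain complexes in which both rows and columns are short exact. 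The $3\times 3$ lemma applied to the two species of connecting homomorphisms then yields the commutative square
\begin{equation*}
\xymatrix{
\varinjlim_i H^*(\cC_+^{R_i,J_i}) \ar[r]^-{\mathrm{Eul}_+} \ar[d]_-{\delta} & \varinjlim_i H^{*+k+1}(\cC_+^{R_i,J_i}) \ar[d]^-{\delta} \\
\varinjlim_i H^{*+1}(\cC_0^{R_i,J_i}) \ar[r]^-{\mathrm{Eul}_0} & \varinjlim_i H^{*+k+2}(\cC_0^{R_i,J_i})
}
\end{equation*}
where $\delta$ is the tautological connecting map and $\mathrm{Eul}_{\pm}$ is the Euler part of the corresponding Gysin sequence. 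Under the identifications $\varinjlim_i H^*(\cC_+^{R_i,J_i})=SH^*_+(W)$ and $\varinjlim_i H^*(\cC_0^{R_i,J_i})=H^*(W)$, the maps $\delta$ are exactly the vertical maps in the statement.

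Next, Proposition \ref{prop:MorseGysin} identifies $\mathrm{Eul}_0$ with the classical Gysin Euler map for $\pi:E\to W$, which by \eqref{eqn:classgysin} is wedge with $(-1)^{*+k+1}e(E)$. Substituting this in produces the diagram in the statement. Well-definedness under the weaker transversality $J_i\in\cJ^{R_i,\le k}_{+,\le i}(W)$ follows from Proposition \ref{prop:natural1} and Corollary \ref{cor:natural}: both the Euler parts and the tautological connecting maps depend only on moduli spaces of dimension at most $k$ together with their continuation maps, all of which are defined and canonical in this regularity class.

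The main obstacle is the chain-level verification underlying the $3\times 3$ diagram. One has to check that the Morse-Bott formula \eqref{eqn:int} for the Gysin differential on $C^*(\cE)$ restricts cleanly to $C^*(\cE_0)$ and descends to $C^*(\cE_+)$, compatibly with the corresponding block decomposition of $C^*(\cC)$, and that the sign conventions of \cite{MB} produce the prefactor $(-1)^{*+k+1}$ without an unexpected extra sign. Once this bookkeeping is complete the rest is formal homological algebra and a routine direct-limit argument.
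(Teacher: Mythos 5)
Your derivation of the commutative square in the fully transverse situation is essentially the paper's: the grid \eqref{eqn:diagram} of Gysin sequences intertwined with the tautological short exact sequences is exactly your $3\times 3$ argument packaged via the machinery of \cite{MB}, and Proposition \ref{prop:MorseGysin} then identifies the constant-orbit Euler map with $\wedge(-1)^{*}e(E)$ as in \eqref{eqn:classgysin}. The gap is in your last step, where you claim that well-definedness for $J_i\in\cJ^{R_i,\le k}_{+,\le i}(W)$ follows because ``the tautological connecting maps depend only on moduli spaces of dimension at most $k$ \ldots all of which are defined and canonical in this regularity class.'' That regularity class only requires transversality for the truncated moduli spaces between \emph{non-constant} orbits up to dimension $k$; it gives no control over the moduli spaces connecting non-constant orbits to interior Morse critical points, nor over the full flow categories $\cC^{R_i,J_i}$ and $\cC_0^{R_i,J_i}$. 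Hence for the given $J_i$ the $3\times 3$ grid of cochain complexes you want to build (in particular the middle column and the connecting map to $H^{*+1}(W)$) need not exist at all, and Proposition \ref{prop:natural1} and Corollary \ref{cor:natural} cannot rescue this, since they only concern the Euler part of the positive sequence.

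The missing step is a perturbation argument: since $\cJ^{R_i,\le k}_{+,\le i}(W)$ is open and contains the fully regular almost complex structures as a dense subset, one chooses $J_i'$ in a connected neighborhood of $J_i$ inside $\cJ^{R_i,\le k}_{+,\le i}(W)$ with $J_i'\in\cJ^{R_i}_{\le i}(W)$, runs the grid argument and Proposition \ref{prop:MorseGysin} for $J_i'$, and then transfers back: Corollary \ref{cor:natural} identifies the Euler parts for $J_i$ and $J_i'$ (they lie in the same connected open set of the low-regularity class), while the independence result \cite[Proposition 2.17]{zhou2019symplectic} shows the vertical maps $\varinjlim_i H^*(\cC^{R_i,J_i'}_{+,\le i})\to H^{*+1}(W)$ do not depend on the choice of $J_i'$, so the square is well-defined for $J_i$ and commutes. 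Your proposal omits both the perturbation and the appeal to this independence statement, and without them the claimed canonicity of the vertical arrows in the weak regularity class is unjustified.
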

\begin{proof}
Since $\cJ^{R_i,\le k}_{+,\le i}(W)$ is open, we can choose $J'_i$ in a connected neighborhood of $J_i$ in $\cJ^{R_i,\le k}_{+,\le i}(W)$, such that $J_i'\in \cJ^{R_i}_{\le i}(W)$. Then by Corollary \ref{cor:natural}, we have the following commutative diagram. 
$$
\xymatrix{
	\varinjlim_i H^*(\cC^{R_i,J_i}_{+,\le i}) \ar[rr]\ar[d]^{=} & &\varinjlim_i  H^{*+k+1}(\cC^{R_i,J_i}_{+,\le i})\ar[d]^{=}\\
	\varinjlim_i H^*(\cC^{R_i,J'_i}_{+,\le i}) \ar[rr]\ar[d] & &\varinjlim_i  H^{*+k+1}(\cC^{R_i,J'_i}_{+,\le i})\ar[d]\\
	H^{*+1}(W)\ar[rr]^{\wedge(-e(E))} & &H^{*+k+2}(W)
}
$$
By \cite[Proposition 2.17]{zhou2019symplectic}, the vertical arrows in the bottom square do not depend on the choice of $J'_i$, the claim follows.
\end{proof}

\subsection{Neck-stretching and independence of the positive Gysin sequence}
Let $(Y,\alpha)$ be a $k$-ADC contact manifold with two topologically simple fillings $W_1,W_2$. Note that $\widehat{W}_1, \widehat{W}_2$ both contain the symplectization $(Y\times (0,\infty)_r, \rd(r\alpha))$. Since $Y$ is $k$-ADC, there exist nested contact type surfaces $Y_i\subset Y\times (0,1)$, such that $Y_i$ lies outside of $Y_{i+1}$ and contractible Reeb orbits of contact form $r\alpha|_{Y_i}$ has the property that the degree is greater than $k$ if the  period is smaller than $D_i$.  

\begin{figure}[H]
	\begin{center}
		\begin{tikzpicture}[yscale=0.5]
		\draw (0,0) to [out=90, in =200] (7,3);
		\draw (7,3) to [out=20, in=190] (9,3.5);
		\draw (0,0) to [out=270, in =160] (7,-3);
		\draw (7,-3) to [out=340, in=170] (9,-3.5);
		\draw (6,2.73) to [out=300, in=60](6,-2.73);
		\draw (1,1.76) to [out=300, in=60](1,-1.76);
		\draw (4,2.45) to [out=330, in=90] (4.5,1) to [out=270, in=90] (3.8,0) to  [out=270, in=90](5,-1) to [out=270, in=60](4,-2.45);
		\draw (2.5,2.25) to [out=330, in=90](4,1.2) to [out=270, in=90](3,0) to [out=270, in=90](2,-1.2) to [out=270, in=120] (2.5,-2.25);
		\node at (4.2,0) {\footnotesize$Y_i$};
		\node at (2.6,0) {\footnotesize$Y_{i+1}$};
		\node at (7.7,0) {\footnotesize$Y$};
		\draw [decorate,decoration={brace,amplitude=10pt,raise=4pt},yshift=0pt]
		(1,1.76) -- (6,2.73) node [black,midway,yshift=0.6cm] {\footnotesize$Y\times (0,1)$};
		\end{tikzpicture}
	\end{center}
	\caption{$Y_i\subset \widehat{W}_*$}
\end{figure}
Neck-stretching near $Y_i$ is given by the following. Assume domains in the form of $Y_i\times [1-\epsilon_i,1+\epsilon_i]_{r_i}$ are disjoint for some small $\epsilon_i$, where $r_i$ is the coordinate determined by the Liouville vector field near $Y_i$ such that $r_i|_{Y_i}=1$. Assume $J|_{Y_i\times [1-\epsilon_i,1+\epsilon_i]_{r_i}}=J_0$, where $J_0$ is independent of $S^1$ and $r_i$,  and $J_0(r_i\partial_{r_i})=R_i,J_0\xi_i=\xi_i$,  where $\xi_i=\ker r\alpha|_{Y_i}$ and $R_i$ is the associated Reeb vector field. Then we pick a family of diffeomorphisms $\phi_R:[(1-\epsilon_i)e^{1-\frac{1}{R}}, (1+\epsilon_i)e^{\frac{1}{R}-1}]\to [1-\epsilon_i,1+\epsilon_i]$ for $R\in (0,1]$ such that $\phi_1=\Id$ and $\phi_R$ near the boundary is linear with slope $1$. Then the stretched almost complex structure $NS_{i,R}(J)$ is defined to be $J$ outside $Y_i\times [1-\epsilon_i,1+\epsilon_i]$ and is $(\phi_R\times \Id)_*J_0$ on $Y_i\times [1-\epsilon_i,1+\epsilon_i]$. Then $NS_{i,1}(J)=J$ and $NS_{i,0}(J)$ gives almost complex structures on the completions of the cobordism $X_i$ between $Y$ and $Y_i$, the filling of $Y_i$, and  the symplectization $Y_i\times \R_+$.

Since we need to stretch along different contact surfaces, we assume the $NS_{i,R}(J)$ have the property that $NS_{i,R}(J)$ will modify the almost complex structure near $Y_{i+1}$ to a cylindrical almost complex structure for $R$ from $1$ to $\frac{1}{2}$ and for $R\le \frac{1}{2}$, we only keep stretching along $Y_i$. We use $\cJ^{\le k}_{reg,SFT,\le i}(H_0)$ to denote the set of admissible regular $J$, i.e.\ almost complex structures satisfying Definition \ref{def:admissble} on the completion of $W$ outside $Y_i$ and asymptotic (in a prescribed way as in the stretching process) to $J_0$ on the negative cylindrical end, such that the following moduli space

$$\left\{ u:\R\times S^1\backslash Z \to \widehat{X_i}\left|\begin{array}{l}
\partial_su+J(\partial_tu-X_{H_0})=0 ,\\
\displaystyle \lim_{s\to \infty} u = x, \lim_{s\to \infty} u =y, \\
\cA_{H_0}(x),\cA_{H_0}(y)>-D_i,\\
Z=\{z_1,\ldots,z_I\}, \\
\displaystyle \lim_{z\to z_j} u =\gamma_j\times \{-\infty\}, \forall 1\le j \le I.
\end{array}     \right. \right\}/\R$$
up to dimension $k$ is cut out transversely. Here $\gamma_j$ is a Reeb orbit on $Y_i$ and $\lim_{z\to z_j} u =\gamma_j\times \{-\infty\}$ is a shorthand for $u$ asymptotic to $\gamma_j$ near the negative puncture $z_j\in \R\times S^1$. Then $\cJ^{\le k}_{reg,SFT,\le i}(H_0)$ is an open dense subset of all admissible almost complex structures on $\widehat{X_i}$. 
\begin{figure}[H]
    \begin{center}
	\begin{tikzpicture}[scale=0.7]
	\draw[rounded corners](0,0) -- (0,4) -- (2,4) -- (2,6);
	\draw[rounded corners](3,6)--(3,5)--(5,5)--(5,3);
	\draw[rounded corners](4,3)--(4,4)--(3,4)--(3,0);
	\draw[rounded corners](2,0)--(2,3)--(1,3)--(1,0);
	
	\draw (0,0) to [out=270,in=180] (0.5,-0.25);
	\draw (0.5,-0.25) to [out=0,in=270] (1,0);
	\draw[dotted] (0,0) to [out=90,in=180] (0.5,0.25);
	\draw[dotted] (0.5,0.25) to [out=0,in=90] (1,0);
	
	\draw (2,0) to [out=270,in=180] (2.5,-0.25);
	\draw (2.5,-0.25) to [out=0,in=270] (3,0);
	\draw[dotted] (2,0) to [out=90,in=180] (2.5,0.25);
	\draw[dotted] (2.5,0.25) to [out=0,in=90] (3,0);
	
	\draw (4,3) to [out=270,in=180] (4.5,2.75);
	\draw (4.5,2.75) to [out=0,in=270] (5,3);
	\draw[dotted] (4,3) to [out=90,in=180] (4.5,3.25);
	\draw[dotted] (4.5,3.25) to [out=0,in=90] (5,3);
	
	\draw (2,6) to [out=270,in=180] (2.5,5.75);
	\draw (2.5,5.75) to [out=0,in=270] (3,6);
	\draw (2,6) to [out=90,in=180] (2.5,6.25);
	\draw (2.5,6.25) to [out=0,in=90] (3,6);
	
	\node at (2.5,6) {$x$};
	\node at (4.5,3) {$y$};
	\node at (0.5,0) {$\gamma_1$};
	\node at (2.5,0) {$\gamma_2$};
	
	\draw[dashed] (-0.5,0)--(0,0);
	\draw[dashed] (1,0)--(2,0);
	\draw[dashed] (3,0)--(8,0);
	\node at (6,0.2) {$Y_i\times \{0\}$};
	
	\end{tikzpicture}
	\end{center}
	\caption{Moduli spaces for the definition of  $\cJ^{\le k}_{reg,SFT,\le i}(H_0)$.}\label{fig:SFT}
\end{figure}
To compare moduli spaces for two Liouville fillings $W_1,W_2$,  we can assume in the following that  $H_R$ outside $Y_i$ is the same for $W_1,W_2$ whenever $R\le\frac{1}{i}$. The following is simply a variant of \cite[Proposition 3.12]{zhou2019symplectic}.

\begin{proposition}\label{prop:neck}
	With setups above, there exist admissible $J^1_*,J^2_*$ on $\widehat{W}_*$ for $*=1,2$ and positive real numbers $\epsilon_1, \epsilon_2,\ldots \le 1$ and $\delta_1,\delta_2,\ldots \le 1$ with $\delta_i\le \frac{1}{i}$,  such that the following holds.
	\begin{enumerate}
		\item\label{ns1} For $R<\epsilon_i$ and any $R'\in [0,1]$, we have $NS_{i,R}(J^i_*)\in \cJ^{R\delta_i,\le k}_{+,\le i}(W_*), NS_{i+1,R'}(NS_{i,R}(J^i_*))\in \cJ^{R'R\delta_i, \le k}_{+,\le i}(W_*)$. Moreover all moduli spaces $\cM_{x,y}$ of dimension up to $k$ are the same for both $W_1,W_2$ and contained outside $Y_i$ for $x,y\in \cP^*(H)$ with action $\ge - D_i$.   
		\item\label{ns2} We have $J^{i+1}_*=NS_{i,\frac{\epsilon_i}{2}}(J^i_*)$ on $W^i_*$ and $\delta_{i+1}=\frac{\epsilon_i\delta_i}{2}$. 
	\end{enumerate}
\end{proposition}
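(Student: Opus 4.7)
The plan is an inductive SFT-type neck-stretching along the contact hypersurfaces $Y_i$, exploiting the $k$-ADC property of $(Y,\alpha)$ to confine every relevant low-dimensional Floer trajectory to the region outside $Y_i$. Since the completion of $W_*$ outside $Y_i$ depends only on the symplectization germ of $(Y,\alpha)$ and not on the filling, any moduli space supported there is automatically the same for $W_1$ and $W_2$. For the base case I would choose an admissible $J^1_*$ with $J^1_1 = J^1_2$ on the shared symplectization, generic in that shared region so that transversality up to dimension $k$ holds simultaneously for both fillings; density and openness of $\cJ^{R,\le k}_{+,\le 1}(W_*)$ together with the density of $\cJ^{\le k}_{reg,SFT}(H_0)$ near $Y_1$ make this possible, and $\delta_1 \le 1$ can be fixed so that $H_{R\delta_1}$ stays in the admissible family.

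The heart of the inductive step is the following confinement claim: for $\ep_i$ small enough and every $R < \ep_i$, each element of $\cM_{x,y}$ of dimension $\le k$ for the stretched structure $NS_{i,R}(J^i_*)$ and for $x,y \in \cP^*(H)$ with action $\ge -D_i$ lies entirely outside $Y_i$. Suppose not; then along a sequence $R_n \to 0$ we obtain cross-$Y_i$ trajectories, and SFT compactness extracts a limit holomorphic building whose negative levels are punctured curves in the symplectization $Y_i \times \R$ and in the completion below $Y_i$, asymptotic to Reeb orbits of $r\alpha|_{Y_i}$. The action bound $\cA_{H_R}(y) \ge -D_i$, together with the control on the slope of $H_R$ near $Y_i$ (after possibly shrinking $\ep_i$), forces these asymptotic orbits to have period $\le D_i$, so Definition \ref{def:kADC} makes their degrees exceed $k$. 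A standard index computation, as in the neck-stretching arguments of \cite{zhou2019symplectic,zhou2019symplectic2}, then shows that the virtual dimension of any nontrivial negative level exceeds the dimension lost to breaking, and the expected dimension of the building exceeds $k$, contradicting $\dim \cM_{x,y} \le k$.

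With confinement in hand, the relevant low-dimensional moduli spaces depend only on $J$ in the shared region outside $Y_i$, so transversality becomes an open condition on $J^i_*$ restricted there and can be achieved simultaneously for both fillings by a single generic perturbation in that common region. This yields $NS_{i,R}(J^i_*) \in \cJ^{R\delta_i, \le k}_{+,\le i}(W_*)$ for all $R < \ep_i$, after shrinking $\delta_i$ so that the Hamiltonian slope matches the stretched profile. Applying the same argument to a further stretch along $Y_{i+1}$, using that $NS_{i,\ep_i/2}(J^i_*)$ is already cylindrical on $Y_i \times (1-\ep_i,1+\ep_i)$ and on a neighbourhood of $Y_{i+1}$ by construction, gives the second clause of \eqref{ns1}. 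I then set $J^{i+1}_* := NS_{i,\ep_i/2}(J^i_*)$ and $\delta_{i+1} := \ep_i \delta_i /2$, shrinking $\ep_i$ further if needed so that $\ep_i,\delta_i \le 1/i$, completing the induction.

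The main obstacle is the index bookkeeping inside the compactness-and-contradiction step: one must verify that the Hamiltonian-Floer virtual dimension of a broken building decomposes cleanly into a Hamiltonian piece above $Y_i$ and a pure SFT piece below, so that the $>k$ degree contribution of the asymptotic Reeb orbits feeds directly into a $>k$ excess of expected dimension. This is the standard ``$k$-ADC rules out breaking'' mechanism implemented in \cite{zhou2019symplectic,zhou2019symplectic2}; once it is in place, all the remaining ingredients — openness and density of low-dimensional transversality, compatibility of generic choices across the shared region, and diagonal control of the parameters $\ep_i,\delta_i$ — are routine.
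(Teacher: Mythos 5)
Your overall strategy is the paper's: an inductive stretch along the $Y_i$, a compactness argument showing that all moduli spaces of dimension $\le k$ and action $\ge -D_i$ are confined outside $Y_i$, and then identification of the (filling-independent) data for $W_1$ and $W_2$ with a diagonal choice of $\ep_i,\delta_i$. However, the step you yourself flag as "the main obstacle" is exactly where your sketch goes wrong, and it is the heart of the proof. The contradiction cannot be extracted from "the virtual dimension of the negative levels" or from "the expected dimension of the building exceeding $k$": the total expected dimension of the limit building is just $\dim\cM_{x,y}$, and the levels inside the filling can never be assumed transverse (they live precisely in the region where the two fillings differ, and no genericity statement there is available or wanted). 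The paper's argument runs entirely on the \emph{top} component $u$ lying outside $Y_i$: by the action/maximum-principle argument of \cite[Lemma 2.4]{cieliebak2015symplectic} this component is connected and its negative punctures are asymptotic to Reeb orbits $\gamma_j$ on $Y_i$ of period $<D_i$; its expected dimension is $|y|-|x|-1-\sum_j(\mu_{CZ}(\gamma_j)+n-3)<\dim\cM_{x,y}-k\le 0$, while $u$ is transversely cut out because the fully stretched structure was chosen in $\cJ^{\le k}_{reg,SFT}(H_0)$, which lives in the filling-independent region. Negative index plus transversality of this one component is what kills the breaking. Related to this, you never invoke topological simplicity: to apply the $k$-ADC degree bound (and to have a well-defined degree at all) you must know the $\gamma_j$ are contractible in $Y_i$; this follows because they bound in $W_*$ and $\pi_1(Y)\to\pi_1(W_*)$ is injective. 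Without this step the degree estimate has no content.

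A second, smaller but real issue is your definition $J^{i+1}_*:=NS_{i,\ep_i/2}(J^i_*)$ on all of $\widehat{W}_*$. The statement only requires this equality on $W^i_*$, and for a reason: to continue the induction one needs $NS_{i+1,0}(J^{i+1}_*)$ to be SFT-regular for split curves with asymptotics on $Y_{i+1}$ and action down to $-D_{i+1}$, and this genericity must be achieved by perturbing $NS_{i,\ep_i/2}(J^i_*)$ \emph{outside} $W^i_*$ (near the orbits in $W^{i+1}_*$), with the integrated maximum principle guaranteeing that the already-constructed truncated moduli spaces are untouched. Taking $J^{i+1}_*$ to be the stretched structure on the nose leaves no room for this perturbation, and "transversality is open" does not rescue it, since openness gives persistence of regularity already achieved, not regularity for the new, deeper action window. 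With the contradiction rerouted through the top-level component (using $\pi_1$-injectivity) and the inductive perturbation reinstated away from $W^i_*$, your argument becomes the paper's proof.
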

\begin{proof}
		We prove the proposition by induction. Firstly, we set $\delta_1=1$. We then choose a $J^1$ such that $NS_{1,0}(J^1)\in \cJ^{\le k}_{reg,SFT,\le 1}(H_0)$. We will apply neck-stretching to $J^1$ at $Y_1$. Note that we need to arrange the Hamiltonian converging to constant near $Y_i$. We consider moduli space $\cM_{x,y,H_R}$ with expected dimension at most $k$ for $NS_{1,R}(J^1)$. Assume $\cM_{x,y,H_R}$ is not contained outside $Y_1$ in the stretching process. Then a limit curve $u$ outside $Y_1$ has one component by \cite[Lemma 2.4]{cieliebak2015symplectic}\footnote{Note that our symplectic action has the opposite sign compared to \cite[Proposition 9.17]{cieliebak2015symplectic}.}. Moreover, by the argument in \cite[Lemma 2.4]{cieliebak2015symplectic}, $u$ can only be asymptotic to Reeb orbits $\{\gamma_i\}_{i\in I}$ that are contractible in $W_*$ on $Y_1$ with period smaller than $D_1$. Since $W_*$ is topological simple, $\{\gamma_i\}_{i\in I}$ are contractible in $Y_1$. In particular, all of them have well-defined $\Z$-valued Conley-Zehnder indices with SFT degree $>k$. The expected dimension of the moduli spaces of such $u$ is $\ind(u)-1=|y|-|x|-\sum_{i\in I}(\mu_{CZ}(\gamma_i)+n-3)-1 <|y|-|x|-1-k<0$. Since $NS_{1,0}(J^1)\in \cJ^{\le k}_{reg,SFT,\le 1}(H_0)$, we have such $u$ is cut transversely. In particular, there is no such $u$ as the expected dimension is negative. Then for $R\ll 1$, we have $\cM_{x,y,H_R}$ using $NS_{1,R}(J^1)$ is contained outside $Y_1$, whenever $\dim \cM_{x,y,H_R}\le k$. Then $NS_{1,0}(J^1)\in \cJ^{\le k}_{reg,SFT,\le 1}(H_0)$ also implies that $NS_{1,R}(J^1)\in \cJ^{R,\le k}_{+,\le 1}(W_*)$ by the openness of transversality. 
		
		Next we will apply neck-stretching both at $Y_1,Y_2$. By the same argument as above,  for every $R'\in [0,1]$, we can find $\epsilon_{R'}>0$ and $\delta_{R'}>0$ such that for $\epsilon < \epsilon_{R'}$ and $|\delta-R'|<\delta_{R'}$,
	    \begin{enumerate}
			\item $NS_{2,\delta}(NS_{1,\epsilon}(J^1))\in \cJ^{\delta\epsilon,\le k}_{+,\le 1}(W_*)$.
			\item $\cM_{x,y,H_{\epsilon\delta}}$  is contained outside $Y_1$ if the expected dimension is at most $k$.
		\end{enumerate}
		Then the compactness of $[0,1]_{R'}$ implies that there exists $\epsilon_1>0$, such that for $R<\epsilon_i$ and any $R'\in [0,1]$, we have $NS_{1,R}(J^i_*)\in \cJ^{R,\le k}_{+,\le 1}(W_*)$ and $NS_{2,R'}(NS_{1,R}(J^1_*))\in \cJ^{R'R,\le k}_{+,\le 1}(W_*)$. Moreover, the moduli space $\cM_{x,y,H_{R'R}}$ for $NS_{2,R'}(NS_{1,R}(J^1_*))$ is contained outside $Y_1$. We can certainly arrange $\epsilon_1$ small enough, such that $\delta_2=\frac{\epsilon_1\delta_1}{2}=\frac{\epsilon_1}{2}\le \frac{1}{2}$. 
		Since moduli spaces in Figure \ref{fig:SFT} for $NS_{2,0}(NS_{1,R}(J^1_*))$ must be contained outside $Y_1$ for $x,y$ with action $\ge -D_1$ when $R\ll 0$ by the same neck-stretching argument along $Y_1$, we may assume  $NS_{2,0}(NS_{1,\frac{\epsilon_1}{2}}(J^1))\in \cJ^{\le k}_{reg,SFT,\le 1}(H_0)$. Therefore we can perturb $NS_{1,\frac{\epsilon_1}{2}}(J^1)\in \cJ^{\frac{\epsilon_1}{2},\le k}_{+,\le 1}(W_*)$ outside $W^1_*$ near orbits in $W^2_*$ to obtain $J^{2}_*$ such that $NS_{2,0}(J^2)\in \cJ^{\le k}_{reg,SFT,\le 2}(H_0)$, this will not influence the previous regular property for periodic orbits with action down to $-D_1$ by the integrated maximum principle. Then we can apply neck-stretching to $J^2_*$ at $Y_2$ to obtain $\epsilon_2$ with desired properties and keep the induction going. 
		
		Since we require that $H_R$ outside $Y_i$ is the same for $W_1,W_2$ whenever $R\le\frac{1}{i}$ and $\delta_i\le \frac{1}{i}$, it is clear that $\cM_{x,y,H_{R'R\delta_i}}$ using $NS_{i+1,R'}(NS_{i,R}(J^i_*))$ can be identified  for $R<\epsilon_i$ whenever the dimension $\le k$ and action of $x,y$ is greater than $-D_i$, as it is contained outside $Y_i$ where all the geometric data are the same.
\end{proof}

\begin{proposition}\label{prop:ind}
	Let $Y$ be a $k$-ADC contact manifold with two topologically simple Liouville fillings $W_1$ and $W_2$. Then for $*=1,2$, there exist a sequence of almost complex structures $\widetilde{J}^1_*,\widetilde{J}^2_*,\ldots$ and a sequence of positive numbers $1>R_1>R_2>\ldots >0$, such that for any oriented $k$-sphere bundles $E_*$ over $W_*$  with $E_1|_Y = E_2|_Y$, we have an isomorphism $\Phi:  \varinjlim_i H(\cC_+^{R_i,\widetilde{J}_1^i}) \simeq SH^*_+(W_1) \to SH^*_+(W_2)\simeq \varinjlim_i H(\cC_+^{R_i,\widetilde{J}_2^i})$ so that the following Euler part of the Gysin exact sequence commutes,
	$$
	\xymatrix{
	 \varinjlim_i H^{*}(\cC_{+,\le i}^{R_i,\widetilde{J}_1^i}) \ar[r]\ar[d]^{\Phi} & \varinjlim_i H^{*+k+1}(\cC_{+,\le i}^{R_i,\widetilde{J}_1^i})\ar[d]^{\Phi} \\
	 \varinjlim_i H^{*}(\cC_{+,\le i}^{R_i,\widetilde{J}_2^i}) \ar[r] & \varinjlim_i H^{*+k+1}(\cC_{+,\le i}^{R_i,\widetilde{J}_2^i})}
	$$
\end{proposition}
\begin{proof}
	Using the almost complex structures from Proposition \ref{prop:neck}, we define $\widetilde{J}^i_*$ to be $NS_{i,\frac{\epsilon_i}{2}}(J^i_*)$ for $*=1,2$. By Proposition \ref{prop:neck}, we have $\widetilde{J}^{i}_*\in \cJ^{\frac{\epsilon_i\delta_i}{2},\le k}_{+,\le i}(W_*)=\cJ^{\delta_{i+1},\le k}_{+,\le i}(W_*)$. Therefore Proposition \ref{prop:natural1}, the direct limit of following commutative sequence computes the Euler part of the Gysin exact sequence,
	$$\xymatrix{
	H^*(\cC^{\delta_2,\widetilde{J}^1_*}_{+,\le 1}) \ar[r]\ar[d] & H^*(\cC^{\delta_3,\widetilde{J}^2_*}_{+,\le 2}) \ar[r]\ar[d] &\ldots \\
	H^{*+k+1}(\cC^{\delta_2,\widetilde{J}^1_*}_{+,\le 1}) \ar[r] & H^{*+k+1}(\cC^{\delta_3,\widetilde{J}^2_*}_{+,\le 2}) \ar[r] & \ldots}
	$$
	
	We first show that the continuation map $H^*(\cC^{\delta_{i+1},\widetilde{J}^i_*}_{+,\le i}) \to  H^*(\cC^{\delta_{i+2},\widetilde{J}^{i+1}_*}_{+,\le i+1})$ is naturally identified for $*=1,2$. Note that the continuation map is decomposed into continuation maps $\Xi:H^*(\cC^{\delta_{i+1},\widetilde{J}^i_*}_{+,\le i})\to H^*(\cC^{\delta_{i+2}, NS_{i+1,\frac{\epsilon_{i+1}}{2}}(\widetilde{J}^{i}_*)}_{+,\le i})$ and $\Psi:H^*(\cC^{\delta_{i+2}, NS_{i+1,\frac{\epsilon_{i+1}}{2}}(\widetilde{J}^i_*)}_{+,\le i}) \to H^*(\cC^{\delta_{i+2},\widetilde{J}^{i+1}_*}_{+,\le i+1})$. Then $\Xi$ is identity by Proposition \ref{prop:natural2} using the regular homotopy $NS_{i+1,s}(\widetilde{J}^i_*)$  for $s\in [\frac{\epsilon_{i+1}}{2}, 1]$. Since $J^{i+1}_*$ is the same as $\widetilde{J}^i_*$ inside $W^i$, we have $NS_{i+1,\frac{\epsilon_{i+1}}{2}}(\widetilde{J}^i_*)$ is $\widetilde{J}^{i+1}_*$ inside $W^i$. Then the integrated maximum principle implies that $\Psi$ is the composition $$H^*(\cC^{\delta_{i+2}, NS_{i+1,\frac{\epsilon_{i+1}}{2}}(\widetilde{J}^i_*)}_{+,\le i}) \stackrel{=}{\to} H^*(\cC^{\delta_{i+2}, \widetilde{J}^{i+1}_*}_{+,\le i}) \stackrel{\subset}{\to}  H^*(\cC^{\delta_{i+2}, \widetilde{J}^{i+1}_*}_{+,\le i+1}),$$
 which is the same for $*=1,2$. Therefore all the horizontal arrows in the diagram can be identified for both $W_1$ and $W_2$. We still need to identify the vertical arrow, i.e.\ the Euler part of Gysin sequence. For $\cC^{\delta_{i+1},\widetilde{J}^i_*}_{+,\le i}$, we pick the parallel transport outside $Y_i$ such that they are identified for $*=1,2$, which is possible since $E|_{\partial W_1}=E|_{\partial W_2}$. Since the Euler part only requires $\cM_{x,y}$ with $\dim \cM_{x,y}\le k$ and parallel transport over them, \eqref{ns1} of Proposition \ref{prop:neck} implies that whole diagram can be identified for $*=1,2$. Then Proposition \ref{prop:natural1} implies the proposition. 
\end{proof}
\begin{remark}
	Using the almost complex structure satisfies the condition here and is close to the condition in \cite[Theorem A]{zhou2019symplectic}, we have the isomorphism in Proposition \ref{prop:ind} also yields the identification of the map $\delta_{\partial}:SH^\bullet_+(W_*) \to H^{\bullet+1}(Y)$ for $*=1,2$.
\end{remark}

\section{Proof of the main theorem and applications}\label{s4}
Our method of proving Theorem \ref{thm:main} is to represent even degree cohomology classes as Euler classes of sphere bundles. The following result explains which class can be realized as the Euler class of a sphere bundle.
\begin{theorem}[{\cite[Theorem 4.1]{guijarro2002bundles}}]\label{thm:euler}
	Given $k,m \in \N$, let $K(\Z,2k)^m$ be the $m$-skeleton of the  Eilenberg-MacLane space $K(\Z,2k)$, with inclusion map $i:K(\Z,2k)^m \hookrightarrow K(\Z,2k)$. Then there is an integer $N(k,m) > 0$  and an oriented $2k$-dimensional vector bundle $\xi_{k,m}$ over $K(\Z,2k)^m$ with $e(\xi_{k,m}) = N(k,m)\cdot i^*u$, where $u$ is the generator of $H^{2k}(K(\Z,2k);\Z)$. 
\end{theorem}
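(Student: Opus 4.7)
The plan is to view the problem as an obstruction-theoretic lifting question. An oriented rank-$2k$ real vector bundle on a space $X$ is classified by a homotopy class of maps $X \to BSO(2k)$, and passing to the Euler class corresponds to post-composing with the map $\chi \colon BSO(2k) \to K(\Z, 2k)$ classifying the universal Euler class. Thus, the theorem is equivalent to showing that, for $X = K(\Z, 2k)^m$, a nonzero integer multiple of the map $i \colon K(\Z,2k)^m \hookrightarrow K(\Z,2k)$ factors through $\chi$ up to homotopy. I will attack this via obstruction theory applied to the Postnikov tower of the homotopy fiber $F := \mathrm{hofib}(\chi)$.

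The first step is a rational analysis of $\chi$. Since $H^*(BSO(2k); \Q)$ is a polynomial algebra on the Pontryagin classes $p_1, \dots, p_{k-1}$ and the Euler class $e$, the space $BSO(2k)$ rationally splits as a product of Eilenberg--MacLane spaces with one factor $K(\Q, 2k)$ corresponding to $e$. Under this splitting, $\chi_\Q$ is (up to finite index on $\pi_{2k}$) the projection onto the $K(\Q,2k)$ factor, and so admits a rational section. In particular, every class in $H^{2k}(X; \Q)$ is realized as the rational Euler class of some rational oriented rank-$2k$ bundle, meaning that all obstructions to an integral lift are rationally trivial.

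The second step is to deploy standard obstruction theory. The obstructions to lifting a map $K(\Z, 2k)^m \to K(\Z,2k)$ through $\chi$ live in the cohomology groups $H^{j+1}\bigl(K(\Z,2k)^m; \pi_j F\bigr)$. Because $K(\Z,2k)^m$ is a finite CW complex and both $BSO(2k)$ and $K(\Z,2k)$ are simply connected of finite type, only finitely many such groups are nonzero, and each one is finitely generated. By the first step, the obstructions become zero after tensoring with $\Q$, and hence each obstruction class is torsion in a finitely generated abelian group. By naturality, scaling the map $i$ by a positive integer $N$ multiplies each obstruction class by $N$. Choosing $N(k,m)$ to be any common multiple of the orders of these finitely many torsion obstructions (which depend only on $k$ and $m$ through the skeleton $K(\Z,2k)^m$) annihilates every obstruction simultaneously, producing a lift $\widetilde{f} \colon K(\Z,2k)^m \to BSO(2k)$ of $N \cdot i$. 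Taking $\xi_{k,m}$ to be the pullback of the universal oriented bundle under $\widetilde{f}$ yields $e(\xi_{k,m}) = N(k,m) \cdot i^* u$ as required.

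The main technical point, and the step requiring the most care, is the rational vanishing of all obstructions, which hinges on verifying that the projection onto the Euler-class factor in the rational splitting of $BSO(2k)$ really is induced by $\chi$ up to finite index. This is where the elementary structure of $H^*(BSO(2k); \Q)$ in terms of Pontryagin and Euler classes enters. After that, the finiteness of $N(k,m)$ is automatic from the finite-type hypothesis and the finite dimensionality of the skeleton, but one should observe that $N(k,m)$ genuinely depends on $m$: as $m$ grows, additional torsion obstructions may appear in the higher Postnikov sections, so $N(k,m)$ must be taken to be the product of orders of all torsion obstructions up through the top cell of $K(\Z,2k)^m$.
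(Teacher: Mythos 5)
The paper does not actually prove this statement; it imports it verbatim from \cite[Theorem 4.1]{guijarro2002bundles}, so there is no internal argument to compare yours with, and your proposal has to be judged on its own. Your overall plan (lift a multiple of $i$ through the map $\chi\colon BSO(2k)\to K(\Z,2k)$ classifying the universal Euler class, observe that the rational splitting $H^*(BSO(2k);\Q)=\Q[p_1,\dots,p_{k-1},e]$ makes all obstructions torsion, then kill the torsion by choosing $N$ large) is the right one and is in the spirit of the cited source. But the pivotal sentence, ``by naturality, scaling the map $i$ by a positive integer $N$ multiplies each obstruction class by $N$,'' is where the proof actually lives, and as stated it is not correct. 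Replacing $i$ by $N\cdot i$ means postcomposing with the self-map $\psi_N$ of $K(\Z,2k)$ inducing multiplication by $N$ on the fundamental class, and the obstruction transforms by $\psi_N^*$ on $H^{j+1}(K(\Z,2k);\pi_jF)$, which is \emph{not} multiplication by $N$: on decomposables such as $u^2$ it is multiplication by $N^2$, and on the torsion classes (which are exactly the ones you need to kill) its effect has to be analysed separately. The statement that actually suffices, and that needs proof, is that for $p\mid N$ the map $\psi_N^*$ annihilates reduced mod-$p$ cohomology of $K(\Z,2k)$ (since $u\mapsto Nu\equiv 0$ and everything is generated from $u$ by Steenrod operations and products), so that a sufficiently divisible $N$ kills all torsion in the finitely many relevant degrees $\le m+1$.

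The second, more structural, problem is that only the primary obstruction is a natural invariant of the map (of the form $f^*\kappa$); the higher obstructions depend on the choice of partial lift at the previous stages and are not well-defined classes that you can ``scale'' once and for all by fixing $N$ at the outset. To make the argument rigorous you need an induction over the Moore--Postnikov tower of $\chi$: at each stage you enlarge $N$ by a further factor $N'$ and explain how the partial lift already constructed is transported to a partial lift of the new map --- for instance by precomposing with a cellular representative of $\psi_{N'}$, which preserves the $m$-skeleton and multiplies $i^*u$ by $N'$ --- and then check that the new obstruction is the image under $(\psi_{N'})^*$ of the old one, hence dies for suitably divisible $N'$. Relatedly, the claim that the obstructions ``become zero after tensoring with $\Q$'' should be justified by noting that the rationalized fibration splits (so \emph{every} rational partial lift extends, not merely some lift of the full map); only then does it follow that the integral obstruction attached to your particular integral partial lift is torsion. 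With these repairs the argument goes through and yields an $N(k,m)$ depending only on $k$ and $m$, but as written the key step is a genuine gap rather than an appeal to naturality.
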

As a corollary of Theorem \ref{thm:euler}, let $W$ be a manifold of dimension $2n$ and $\alpha \in H^{2k}(W;\Z)$. Then $\alpha$ is uniquely represented by the homotopy class of a map $f_\alpha:M \to K(\Z,2k)^{2n+1}$. Then the Euler class of $f_\alpha^*\xi_{k,2n+1}$ is $N(k,2n+1)\cdot\alpha$. We first obtain the following proposition, which may have some independent interest.
\begin{proposition}\label{prop:lowdg}
	Let $Y$ be a $k$-ADC manifold with a topologically simple Liouville filling $W_1$ such that  $SH^*(W_1)=0$. Then for any other topologically simple Liouville filling $W_2$, we have $H^{2m}(W_2) \to H^{2m}(Y)$ is injective for $2m \le k+1$.
\end{proposition}
\begin{proof}
	Note that the abelian group $H^{2m}(W_2;\Z)$ has a non-conical decomposition into free and torsion parts $H^{2m}(W_2;\Z)=H^{2m}_{free}(W_2;\Z)\oplus H^{2m}_{tor}(W_2;\Z)$. To prove the injectivity of $H^{2m}(W_2) \to H^{2m}(Y)$ for real cohomology, it suffices to show that for any decomposition we have $H^{2m}_{free}(W_2;\Z) \to H^{2m}(Y;\Z)$ is an injection for $2m \le k+1$. Assume otherwise, there is an element $\alpha \in H^{2m}_{free}(W_2;\Z)\subset H^{2m}(W_2;\Z)$ such that $\alpha|_Y = 0$ in $H^{2m}(Y;\Z)$. By Theorem \ref{thm:euler}, there exist $N\in \N$, a $2m$-dimensional vector bundle $\xi_{m,2n+1}$ over the $2n+1$ skeleton $K(\Z,2m)^{2n+1}$ of the Eilenberg-MacLane space $K(\Z,2m)$ and $f_\alpha: W_2 \to K(\Z,2m)^{2n+1}$, such that the Euler class of $E_2:=f_\alpha^*\xi_{m,2n+1}$ is $N\alpha$. Since $\alpha|_Y = 0$ in $H^{2m}(Y;\Z)$, we have $f_\alpha|_Y: Y \to K(\Z,2m)^{2n+1}$ is contractible. Hence $E_2|_Y$ is a trivial bundle. Let $E_1\to W_1$ be the trivial sphere bundle. Therefore by Proposition \ref{prop:ind}, there exists 
	almost complex structures $J_*^1,J_*^2\ldots$ and $1>R_1>R_2>\ldots > 0$, such the Euler part for the positive symplectic cohomology for $E_1\to W_1$ and $E_2\to W_2$ can be identified. By \cite[Corollary B]{zhou2019symplectic}, we have $SH^*(W_2)=0$. Then Proposition \ref{prop:final} implies the following commutative diagram
	$$
	\xymatrix{H^{*+1}(W_1)\ar[rr]^{0} && H^{*+k+2}(W_1)\\
		\varinjlim H^*(\cC^{R_i,J^i_1}_{+,\le i}) \ar[rr]\ar[d]^{\simeq}\ar[u]_{\simeq} && H^{*+k+1}(\cC^{R_i,J^i_1}_{+,\le i})\ar[d]^{\simeq}\ar[u]_{\simeq}\\
		\varinjlim H^*(\cC^{R_i,J^i_2}_{+,\le i}) \ar[rr]\ar[d]^{\simeq} && H^{*+k+1}(\cC^{R_i,J^i_2}_{+,\le i})\ar[d]^{\simeq}\\
		H^{*+1}(W_2)\ar[rr]^{\wedge(-N\alpha) } && H^{*+k+2}(W_2)
	}
	$$
	We arrive at a contradiction, since $N\alpha \ne 0$.
\end{proof}	
Proposition \ref{prop:lowdg} says that if we have extra room in the positivity of the SFT degree and also the vanishing of symplectic cohomology, then $H^{*}(W)\to H^*(Y)$ is necessarily injective for low even degrees. For example, $Y^{2n-1}$ is flexibly fillable contact manifold, then $Y$ is $(n-3)$-ADC \cite{lazarev2016contact}. In this case, we have $H^*(W) \to H^*(Y)$ is always injective for even degree $*\le n-2$. Note that such property also follows from \cite[Corollary B]{zhou2019symplectic} that $H^*(W)\to H^*(Y)$ is independent of fillings and for $*<n-2$ we have $H^*(W)\to H^*(Y)$ is an isomorphism for Weinstein fillings. However, Proposition \ref{prop:lowdg} holds for very different reasons. Note that we do not assume $H^{2m}(W_1)\to H^{2m}(Y)$ is injective in Proposition \ref{prop:lowdg}.

\begin{proof}[Proof of Theorem \ref{thm:main}]
	By Proposition \ref{prop:ind}, we can pick $J^1_*,J^2_*,\ldots$ and $1>R_1>R_2>\ldots >0$ such that the Euler part of the positive symplectic cohomology for $W_1,W_2$ can be identified as long as $E_1|_Y=E_2|_Y$. Since $SH^*(W_1)=SH^*(W_2)=0$, we can define $\phi$ to be the composition of 
	$$H^{*}(W_1)\stackrel{\simeq}{\to} \varinjlim_i H^{*-1}(\cC^{R_i,J^i_1}_{+,\le i})\stackrel{\simeq}{\to} \varinjlim_i H^{*-1}(\cC^{R_i,J^i_2}_{+,\le i}) \stackrel{\simeq}{\to} H^{*}(W_2).$$
	In other words, $\phi$ is the identification in \cite[Corollary B]{zhou2019symplectic}, such that 
	\begin{equation}\label{eqn:com}
	\xymatrix{
	H^*(W_1)\ar[rr]^{\phi} \ar[rd] & & H^*(W_2) \ar[ld]\\
	& H^*(Y) &
    }
	\end{equation}
	is commutative. In particular, we have $\phi(1)=1$\footnote{Without \cite[Corollary B]{zhou2019symplectic}, $\phi(1)=\pm 1$ can already be obtained by grading.} and  and $\phi$ is actually induced from an isomorphism $\phi_{\Z}$ for $\Z$-coefficient cohomology.  We pick an element $\alpha_1\ne 0 \in H^{2k}(W_1;\Z)$ for $2k\le n-2$. By  \cite[Corollary B]{zhou2019symplectic},  let $\alpha_2=\phi_{\Z}(\alpha_1)\in H^{2k}(W_2;\Z)$,  then we have $\alpha_2|_Y=\alpha_1|_Y\in H^*(Y;\Z)$ by the $\Z$-coefficient version of \eqref{eqn:com}, i.e.\ \cite[Corollary B]{zhou2019symplectic}. By Theorem \ref{thm:euler}, there exist $N\in \N$ and a bundle $\xi_{k,2n+1}$, such that $E_*:=f_{\alpha_*}^*\xi_{k,2n+1}$ is a vector bundle over $W_*$ with Euler class $N\alpha_*$ for $*=1,2$, where $f_{\alpha_*}:W_*\to K(\Z,2k)^{2n+1}$ is the map representing $\alpha_*$.  Since $\alpha_2|_Y=\alpha_1|_Y\in H^*(Y;\Z)$, we have $f_{\alpha_1}|_Y$ is homotopic to $f_{\alpha_2}|_Y$. As a consequence, we have  $E_1|_Y = E_2|_{Y}$ and $e(E_1) = N\alpha_1,e(E_2)=N\alpha_2$. Then by the same argument in Proposition \ref{prop:lowdg}, we have the following commutative diagram
		$$
	\xymatrix{H^{*+1}(W_1)\ar[rr]^{\wedge(- N\alpha_1)} \ar@/_4.0pc/[ddd]^{\phi} && H^{*+k+2}(W_1) \ar@/^4.0pc/[ddd]^{\phi}\\
		\varinjlim H^*(\cC^{R_i,J^i_1}_{+,\le i}) \ar[rr]\ar[d]^{\simeq}\ar[u]_{\simeq} && H^{*+k+1}(\cC^{R_i,J^i_1}_{+,\le i})\ar[d]^{\simeq}\ar[u]_{\simeq}\\
		\varinjlim H^*(\cC^{R_i,J^i_2}_{+,\le i}) \ar[rr]\ar[d]^{\simeq} && H^{*+k+1}(\cC^{R_i,J^i_2}_{+,\le i})\ar[d]^{\simeq}\\
		H^{*+1}(W_2)\ar[rr]^{\wedge(-N\alpha_2) } && H^{*+k+2}(W_2)
	}
	$$
	Then we have  $\phi(N\alpha_1\wedge \beta)= N\alpha_2\wedge \phi(\beta)$. Since $\phi(1)=1$, it follows that $\phi(N\alpha_1)=N\alpha_2$ and $\phi(N\alpha_1\wedge \beta)= N\alpha_2\wedge \phi(\beta)=\phi(N\alpha_1)\wedge \phi(\beta)$. This finishes the proof.
\end{proof}

\begin{remark}
	Combining with the argument in this paper with \cite{zhou2019symplectic}, one can prove the following commutative diagram for a $k$ sphere bundle $E$ is independent of fillings and extensions of $E|_Y$ to $W$ as long as $Y$ is $k$-ADC
	$$\xymatrix{
	    SH^*_+(W) \ar[r]^{\delta_{\partial}} \ar[d]^{e(E)} & H^{*+1}(Y) \ar[d]^{e(E)} \\
	    SH^{*+k+1}_+(W) \ar[r]^{\delta_{\partial}}  & H^{*+k+2}(Y)  
       }$$
   As a corollary $\Ima \delta_{\partial}$ is closed under multiplication by the Euler class $e(E|_Y)$.  Then the argument of Theorem \ref{thm:main} implies that $\Ima \delta_{\partial}$ is closed under the multiplication by even degree $\le k+1$ elements in $\Ima \delta_{\partial}$. Note that $\Ima \delta_{\partial}$ is an interesting invariant of ADC manifolds and can be used to define obstructions to Weinstein fillability.
\end{remark}

Theorem \ref{thm:main} can be applied to examples listed in Example \ref{ex:ex}, the major class would be flexibly fillable contact manifolds. In the following, we list several cases, where the whole real cohomology ring is unique. For simplicity, we only consider simply connected contact manifolds. Note that the following Corollary includes Corollary \ref{cor:ring}.
\begin{corollary}\label{cor:ringlist}
	Let $Y$ be a simply connected flexibly fillable contact manifold satisfying one the following conditions, 
	\begin{enumerate}
		\item $Y$ is $4n+1$ dimensional for $n\ge 1$;
		\item\label{c2} $Y$ is $4n+3$ dimensional for $n\ge 1$, and the flexible filling $W$ has the property that for every $\alpha \wedge \beta \in H^{2n+2}(W)$ with $\deg(\alpha),\deg(\beta)$ odd, then $\alpha$ or $\beta$ can be decomposed into a nontrivial product;
	\end{enumerate}
then $H^*(W)$ as a ring is unique for any Liouville filling $W$ with $c_1(W)=0$.
\end{corollary}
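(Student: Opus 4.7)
The plan is to bootstrap from Theorem \ref{thm:main} using two supplementary inputs: (i) the restriction $H^*(W)\to H^*(Y)$ is injective in a range of degrees determined by the Weinstein handle structure of the flexible filling, and (ii) by \cite[Corollary B]{zhou2019symplectic}, the linear isomorphism $\phi$ produced by Theorem \ref{thm:main} intertwines these restrictions.

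First, since $Y^{2m-1}$ is flexibly fillable it is $(m-3)$-ADC, and simple connectivity makes $\pi_1$-injectivity automatic. Apply Theorem \ref{thm:main} with $W_1$ a fixed flexible Weinstein filling and $W_2=W$ an arbitrary Liouville filling with $c_1(W)=0$. This produces a grading-preserving linear isomorphism $\phi:H^*(W_1)\to H^*(W_2)$, multiplicative whenever one factor has even degree at most $m-2$; here $m=2n+1$ in case (1) and $m=2n+2$ in case (2), giving bounds $2n-1$ and $2n$ respectively. Since $W_1$ is Weinstein of dimension $2m$ with critical indices $\le m$, Poincar\'e--Lefschetz duality gives $H^d(W_1,Y)\cong H_{2m-d}(W_1)=0$ for $d\le m-1$, so $r_1:H^d(W_1)\to H^d(Y)$ is injective for $d\le m-1$ and an isomorphism for $d\le m-2$. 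Via $\phi$ and \cite[Corollary B]{zhou2019symplectic}, the same holds for $W_2$; in particular $H^1(W_2)=0$ (using $H^1(Y)=0$) and $H^d(W_2)=0$ for $d>m$.

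The core observation is a restriction lemma: if $\gamma\wedge\delta\in H^d(W_1)$ with $d\le m-1$, then applying $r:H^*(W_2)\to H^*(Y)$ gives
\[
r(\phi(\gamma\wedge\delta))=r_1(\gamma)\wedge r_1(\delta)=r(\phi(\gamma))\wedge r(\phi(\delta))=r(\phi(\gamma)\wedge\phi(\delta)),
\]
and injectivity of $r$ forces $\phi(\gamma\wedge\delta)=\phi(\gamma)\wedge\phi(\delta)$. Combined with Theorem \ref{thm:main}'s multiplicativity (one even factor of degree $\le m-2$) and the vanishing $H^d(W_1)=0$ for $d>m$ together with $H^1(W_1)=0$, a bidegree analysis shows this already suffices in case (1): any product not covered by Theorem \ref{thm:main} with nontrivial target must have two ``bad'' factors (each either odd or even of degree $\ge m-1$), but in case (1) the only bad even degree is $2n$ and it can only pair with degree $\le 1$, which vanishes; the remaining possibility is odd-odd products, whose even target degrees are automatically $\le 2n=m-1$, handled by the restriction lemma.

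In case (2), the same reduction leaves only odd-odd products landing in $H^{m}(W_1)=H^{2n+2}(W_1)$, where $r$ need not be injective. For such $\alpha\wedge\beta$, the hypothesis gives a nontrivial factorization, say $\alpha=\gamma\wedge\delta$; since $\deg\alpha$ is odd, exactly one of $\gamma,\delta$ is even, WLOG $\gamma$, with $2\le\deg\gamma\le\deg\alpha-1\le 2n=m-2$. Theorem \ref{thm:main} applies to $\gamma$, while $\delta\wedge\beta$ is even of degree $2n+2-\deg\gamma\le 2n$, so the restriction lemma applies to $\delta\wedge\beta$. Combining,
\[
\phi(\alpha\wedge\beta)=\phi(\gamma)\wedge\phi(\delta\wedge\beta)=\phi(\gamma)\wedge\phi(\delta)\wedge\phi(\beta)=\phi(\gamma\wedge\delta)\wedge\phi(\beta)=\phi(\alpha)\wedge\phi(\beta).
\]
The only subtle step is the exhaustive bidegree case analysis and the bookkeeping for when the restriction is injective; no Floer-theoretic input beyond Theorem \ref{thm:main} and \cite[Corollary B]{zhou2019symplectic} is required.
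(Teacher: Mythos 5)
Your proof is correct and follows essentially the same route as the paper: Theorem \ref{thm:main} handles products with an even factor of degree $\le \frac{\dim W}{2}-2$, filling-independence of $H^*(W)\to H^*(Y)$ from \cite[Corollary B]{zhou2019symplectic} together with injectivity of the restriction in degrees $\le \frac{\dim W}{2}-1$ handles products of low total degree, and case (2) is reduced by the same factorization $\alpha=\gamma\wedge\delta$ into one even low-degree factor plus a low-degree product. You merely make explicit (via Poincar\'e--Lefschetz duality and the bidegree case check) what the paper's terser argument takes for granted.
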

\begin{proof}
 By Theorem \ref{thm:main} and \cite[Corollary B]{zhou2019symplectic}, the ring structure on $H^*(W)$ is unique if one of the multipliers is of even degree $\le \frac{\dim W}{2}-2$ or if the degree of product is $\le \frac{\dim W}{2}-1$. When $\dim Y=4n+1$, if the degree of the product is in the undetermined region, i.e.\ $\frac{\dim W}{2}=2n+1$, then one of the multipliers must be of even degree. If $Y$ is simply connected, then $H^1(W)$ is zero by \cite[Corollary B]{zhou2019symplectic}. As a consequence, the other odd degree multiplier must have degree $\ge 3$. Therefore the even degree multiplier has degree at most $\frac{\dim W}{2}-3$. In particular,  all products fall in the above two cases. Therefore the ring structure is unique. In case \eqref{c2}, the undetermined case is when the product has degree $2n+2$. If the product is from two classes of even degree, then we can apply Theorem \ref{thm:main}. If the product is from two classes of odd degree, then by assumptions that one of them can be reduced to a nontrivial product. Since the ring structure in that degree is unique, hence the decomposition exists for any other filling. Therefore the product can be rewritten as a product of two even elements. Hence the ring structure is unique.
\end{proof}

\begin{proof}[Proof of Corollary \ref{cor:real}]
By Corollary \ref{cor:ringlist}, the real cohomology ring of the filling is unique. Moreover, it is straightforward to verify that the cohomology ring of products of $\CP^n,\mathbb{HP}^n, S^{2n}$ and at most one copy of $S^{2n+1}$ for $n\ge 1$ has unique minimal models. By \cite[Theorem E]{zhou2019symplectic}, any exact filling of $\partial(\mathrm{Flex}(T^*M))$ with vanishing first Chern class is necessary simply connected, in which case, the real homotopy type is determined by the minimal model by \cite[\S 19]{bott2013differential}.
\end{proof}

Theorem \ref{thm:main} can only be applied when there is one filling with vanishing symplectic cohomology. In some cases, symplectic cohomology vanishes with non-trivial local systems \cite{albers2017local}. Here we only give one special example in such case. 

\begin{proposition}
	Assume $W$ is a Liouville filling of $Y:=\partial{T^*\CP^n}$ for $n\ge 3$ odd, which is $(2n-4)$-ADC. If $c_1(W)=0$ and $H^2(W;\Z)\to H^2(Y;\Z)$ is not zero. Then real cohomology ring $H^*(W)$ is isomorphic to $H^*(T^*\CP^n)$.
\end{proposition}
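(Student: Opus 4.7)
The plan is to apply a local-system variant of Theorem \ref{thm:main} with $T^*\CP^n$ as the reference filling. By \cite{albers2017local}, for odd $n$ there exists a nontrivial $\R/\Z$-local system $\eta_0$ on the free loop space of $\CP^n$, obtained by transgression from a class in $H^2(\CP^n;\R/\Z)$, such that the twisted symplectic cohomology $SH^*(T^*\CP^n;\eta_0)$ vanishes. Crucially, transgression from $M$ to $\mathcal{L}M$ restricts trivially to the constant loops, so the induced local system on the constant orbit locus is trivial, and the twisted tautological long exact sequence takes the form
$$H^*(W;\R)\to SH^*(W;\eta_W)\to SH^*_+(W;\eta_W)\to H^{*+1}(W;\R),$$
with \emph{untwisted} cohomology on the ends. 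Consequently, the Gysin sequence machinery of Section \ref{s3} applied with local systems probes the untwisted cup product on $H^*(W;\R)$.

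Next, I would verify that the local system extends to $W$. A Gysin computation for the unit sphere bundle $S^{2n-1}\hookrightarrow Y\to \CP^n$, whose Euler class $(n+1)x^n$ is nonzero in real cohomology, gives $H^2(Y;\Z)\cong\Z$, generated by the restriction of the generator of $H^2(\CP^n;\Z)$. By \cite[Corollary B]{zhou2019symplectic}, the image of $H^2(W;\Z)\to H^2(Y;\Z)$ is filling-independent and coincides with the image from $T^*\CP^n$; together with the hypothesis that this map is nonzero, there exists $c\in H^2(W;\Z)$ whose restriction agrees with that of the class defining $\eta_0|_Y$, producing the desired extension $\eta_W$.

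Then I would adapt the neck-stretching and Gysin constructions of Section \ref{s3} to twisted coefficients. Since the local systems agree on the common end, the moduli-space comparison of Proposition \ref{prop:ind} goes through verbatim after inserting the local-system weights, yielding filling-independence of positive twisted symplectic cohomology and the associated Gysin sequences in dimensions up to $2n-4$. Combining $SH^*(T^*\CP^n;\eta_0)=0$ with the untwisted-endpoints exact sequence and the analog of Proposition \ref{prop:final} produces a grading-preserving linear isomorphism $\phi:H^*(T^*\CP^n)\to H^*(W)$ satisfying $\phi(\alpha\wedge\beta)=\phi(\alpha)\wedge\phi(\beta)$ for every $\alpha$ of even degree at most $2n-3$.

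Finally, $H^*(T^*\CP^n)=\R[x]/(x^{n+1})$ is generated by $x\in H^2$, and $2\le 2n-3$ for $n\ge 3$, so $\phi(x)^j=\phi(x^j)$ is nonzero in $H^{2j}(W)$ for $j=0,1,\ldots,n$. The Liouville domain property gives $H^i(W)=0$ for $i>2n$, forcing $\phi(x)^{n+1}=0$, and hence $H^*(W)\cong \R[x]/(x^{n+1})$ as graded rings. The main technical obstacle is the clean setup of symplectic cohomology twisted by a local system within the Morse-Bott/polyfold framework of Section \ref{s3}, together with the verification that the neck-stretching and Gysin constructions transfer faithfully to the twisted setting; granted this, the geometric content of the proof is a direct twisted analogue of Theorem \ref{thm:main}.
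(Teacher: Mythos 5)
Your overall strategy is the same as the paper's: twist symplectic cohomology by a local system coming from a degree-two class, use vanishing of the twisted theory, and rerun the argument of Theorem \ref{thm:main} with untwisted endpoints in the tautological sequence. However, there is a genuine gap at the crucial step. Your chain of isomorphisms $H^*(T^*\CP^n)\cong SH^{*-1}_+(T^*\CP^n;\eta_0)\cong SH^{*-1}_+(W;\eta_W)\cong H^*(W)$ requires, for the last arrow, that the \emph{full} twisted symplectic cohomology $SH^*(W;\eta_W)$ of the unknown filling vanishes; otherwise the connecting map $SH^{*-1}_+(W;\eta_W)\to H^*(W)$ is not an isomorphism. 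You only cite \cite{albers2017local} for vanishing on the $T^*\CP^n$ side and never address vanishing for $W$. This is not formal: in the untwisted Theorem \ref{thm:main} the analogous step is supplied by the filling-independence-of-vanishing result of \cite{zhou2017vanishing}, and in the twisted setting it is exactly what the paper's citation of \cite[Theorem D]{zhou2019symplectic} provides (a local system on \emph{both} $W$ and $T^*\CP^n$ agreeing on $Y$ with twisted symplectic cohomology vanishing for \emph{both}). That theorem is also precisely where the hypothesis that $\CP^n$ is spin for $n$ odd enters; your proposal never uses the spin condition at all, which is a sign the key input is missing. To close the gap you would have to prove a twisted analogue of the vanishing-transfer argument (e.g.\ filling-independence of the composition $SH^{-1}_+(\,\cdot\,;\eta)\to H^0(\cdot)\to H^0(Y)$ hitting the unit), which is additional work beyond what you sketch.

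A secondary problem: you invoke \cite[Corollary B]{zhou2019symplectic} to claim the image of $H^2(W;\Z)\to H^2(Y;\Z)$ is filling-independent, but that corollary's hypotheses (a filling with vanishing untwisted symplectic cohomology) fail here, since $SH^*(T^*\CP^n)\neq 0$ untwisted -- indeed this is why the proposition carries the explicit hypothesis that $H^2(W;\Z)\to H^2(Y;\Z)$ is nonzero. The extension of the local system to $W$ should instead be extracted directly from that hypothesis (using $H^2(Y;\Z)\cong\Z$, one obtains a class restricting to a nonzero multiple of the generator, which suffices for the vanishing argument after replacing $\eta_0$ by the corresponding multiple); as written, your justification of the extension is circular with respect to the hypotheses actually available.
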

\begin{proof}
	Since $\CP^n$ is spin for $n$ odd, by \cite[Theorem D]{zhou2019symplectic}, there is a local system on both $W$ and $T^*\CP^n$, such that they are the same on $Y$ and the twisted symplectic cohomology vanishes for both $W$ and $T\CP^n$. Then we can apply the same argument of Theorem \ref{thm:main} to the case with local systems to finish the proof.
\end{proof}

\bibliographystyle{plain} 
\bibliography{ref}
\end{document}